\documentclass[12pt]{article}

\usepackage[utf8]{inputenc}
\usepackage{tabularx}
\usepackage{amsmath,amssymb,romannum}
\usepackage{longtable}
\usepackage{comment}
\usepackage{amsfonts,amsthm}
\usepackage[left=1in,top=1in,right=1in,bottom=1in,letterpaper]{geometry}
\usepackage{multirow,booktabs,makecell}
\usepackage{graphicx,epstopdf,subcaption}
\usepackage{standalone}
\usepackage{tikz}
\usetikzlibrary{positioning}
\usetikzlibrary{arrows.meta}
\usepackage{hyperref}
\usepackage{algorithm,algorithmic}
\usepackage{color}
\usepackage[sort&compress]{natbib}

\newtheorem{theorem}{Theorem}

\newtheorem{lemma}{Lemma}

\theoremstyle{remark}
\newtheorem{remark}{Remark}
\newtheorem{assumption}{Assumption}

\DeclareMathOperator*{\argmin}{arg\,min}

\title{Structure-Preserving Reduced-Order Modeling via Low-Rank Transport Signatures}

\author{
Jiajia Yu\thanks{jiajia.yu@duke.edu. Department of Mathematics, Duke University, Durham, NC 27710}
\and
Jingwei Hu\thanks{hujw@uw.edu. Department of Applied Mathematics, University of Washington, Seattle, WA 98195}
\and Fengyan Li\thanks{lif@rpi.edu. Department of Mathematical Sciences, Rensselaer Polytechnic Institute, Troy, NY 12180}
\and Shanyin Tong\thanks{tong3@sas.upenn.edu. Department of Mathematics, University of Pennsylvania, Philadelphia, PA 19104}
\and Yunan Yang\thanks{yunan.yang@cornell.edu. Department of Mathematics, Cornell University, Ithaca, NY 14850}
\and
Zhaiming Shen\thanks{zshen49@gatech.edu.
School of Mathematics,
Georgia Institute of Technology, Atlanta, GA 30332}
}

\date{}

\begin{document}

\maketitle

\pagenumbering{arabic}

\begin{abstract}
    Parametrized PDEs with density-valued solutions are often difficult to approximate with classical linear reduced-order models, especially in transport-dominated regimes. We introduce an optimal-transport-based reduced-order modeling that represents each density by the Kantorovich potential transporting a fixed reference density to the target density, and then maps these potentials to transport signatures using a weighted Laplacian associated with the reference measure. This embeds the density-valued solution map in a Hilbert space while preserving control of the induced transport maps and Wasserstein error. We treat the signature map as a continuous matrix indexed by parameters and space, construct a low-rank skeleton decomposition using a maximal-volume criterion, and learn the parameter-to-coefficient map with a neural network for efficient non-intrusive online evaluation. The reconstructed solution is obtained by pushing forward the reference density, so mass preservation is built into the method. We prove a mean-squared Wasserstein error bound separating low-rank approximation, discretization, sampling, and learning errors, and demonstrate the method on a two-dimensional continuity equation, where transport signatures yield substantially lower-rank structure than the original density snapshots.
\end{abstract}

% \tableofcontents

\section{Introduction}
Parametrized partial differential equations arise in many-query and real-time settings where a high-fidelity model must be evaluated repeatedly for many parameter values. Reduced-order modeling (ROM) seeks to replace such high-dimensional simulations by inexpensive surrogate models that retain the dominant dependence of the solution on the parameters. Classical approaches, including proper orthogonal decomposition (POD) and reduced-basis methods, construct a low-dimensional linear approximation space from representative snapshots and then approximate new solutions by projection or regression in this space~\citep{HesthavenRozzaStamm2016,BennerCohenOhlbergerWillcox2017,HesthavenPagliantiniRozza2022}. These methods have been highly successful for many elliptic, parabolic, and mildly nonlinear problems, especially when the associated solution manifold has rapidly decaying Kolmogorov widths~\citep{DahmenPleskenWelper2014, Peherstorfer2020,FrescaDedeManzoni2021,HesthavenPeherstorferUnger2026}.

However, transport-dominated problems pose a persistent challenge for linear reduced-order models. When coherent structures translate, deform, or concentrate, the solution manifold may be intrinsically low-dimensional but poorly approximated by a fixed linear subspace. A simple traveling wave, for instance, can require many linear modes despite being described by only a few physical parameters. This phenomenon is often referred to as the Kolmogorov-width barrier for transport-dominated problems. It has motivated a broad class of nonlinear ROM techniques, including shifted bases, registration methods, Lagrangian formulations, transported subspaces, adaptive bases, and neural-network-based nonlinear manifolds~\citep{RimPeherstorferMandli2023, NoninoBallarinRozzaMaday2023,Blickhan2024}.

The difficulty is even more pronounced when the state variable is a probability distribution or density. Linear combinations of density snapshots can destroy positivity, distort mass, and measure error in a geometry that is insensitive to displacement. For density-valued PDEs, such as continuity equations and Fokker--Planck equations, it is natural to compare solutions by the cost of transporting mass rather than by pointwise amplitude differences. Optimal transport provides precisely such a geometry through the Wasserstein distance, and it has become a central tool in the analysis and computation of probability measures~\citep{Villani2003, Santambrogio2015, PeyreCuturi2019,JordanKinderlehrerOtto1998}. 

Several reduced-order modeling strategies have recently incorporated ideas from optimal transport.  One line of work formulates model reduction directly in metric or Wasserstein spaces, using Wasserstein barycenters or tangent-space approximations to construct nonlinear reduced models for families of probability measures~\citep{EhrlacherLombardiMulaVialard2020,BattistiBlickhanEncheryEhrlacherLombardiMula2023}.  A related approach uses sparse Wasserstein barycenters to approximate and interpolate parametrized families of measures~\citep{DoFeydyMula2025}.  Optimal transport has also been combined with classical reduced-order modeling techniques, for example through POD-based approximations for kinetic equations~\citep{BernardIolloRiffaud2018}.  Another line of work uses displacement interpolation to generate physically meaningful interpolants between density snapshots or to augment limited training data~\citep{KhamlichPichiGirfoglioQuainiRozza2025}.  Optimal-transport-based registration methods have been used to align moving features before applying classical reduced-basis approximation~\citep{Blickhan2024}.  More broadly, linearized optimal transport and Monge-type embeddings provide Hilbert-space representations of probability measures through optimal maps from a reference distribution~\citep{WangSlepcevBasuOzolekRohde2013,KolouriTosunOzolekRohde2016,KolouriParkThorpeSlepcevRohde2017,MerigotDelalandeChazal2020}.  These works demonstrate that optimal transport can overcome some limitations of linear ROMs for transport-dominated data, while preserving important geometric and physical structure.

In this paper, we introduce an optimal-transport-based reduced-order modeling framework for probability distributions generated by parametrized partial differential equations. Let $\rho(\mu)$ denote the probability distribution associated with parameter value $\mu$, where time may be included among the parameters. Instead of constructing a reduced-order model directly for the densities $\rho(\mu)$, we first represent each distribution by the Kantorovich potential $\phi(\mu)$ transporting a fixed reference distribution $\bar\rho$ to $\rho(\mu)$. Thus the corresponding optimal transport map is
\[
    T_{\bar\rho\to\rho(\mu)}=I-\nabla \phi(\mu).
\]
We then define a transport signature
\[
     \mathbf{\Psi}(\mu,\cdot)=(I-\mathcal L)^{1/2}\phi(\mu), 
\]
where $\mathcal L=\nabla\cdot(\bar\rho\nabla)$ is a weighted Laplacian associated with the reference distribution. This transformation places the family of transport potentials in a Hilbert-space setting while retaining direct control of the transport maps: approximation error in the signature controls the $L^2(\bar\rho)$ error of the corresponding optimal maps, and hence controls the induced $2$-Wasserstein error after pushforward.

The central computational idea is to regard $\mathbf{\Psi}(\mu,x)$ as a continuous matrix, or cmatrix, with rows indexed by the parameter $\mu$ and columns indexed by the spatial variable $x$~\citep{TownsendTrefethen2015}. We then construct a low-rank approximation in this transport-signature space using a skeleton decomposition based on a maximal volume criterion~\cite{goreinov1997theory,goreinov2001maximal,goreinov2010good}. The selected rows correspond to transport signatures at representative parameter values,
\[
    \psi_s(\cdot)=\mathbf{\Psi}(\mu_{i_s},\cdot), \qquad s=1,\ldots,r.
\]
For each training parameter $\mu_j$, we compute coefficients by least-squares fitting on a spatial grid with $m$ points, obtaining an approximation of the form 
%\stnote{maybe saying $m$ is the number of grid points used for least squares? e.g, "on a spatial grid with $m$ points"}\jy{added}
\[
    \mathbf{\Psi}(\mu_j,x)\approx \sum_{s=1}^r a_{m,s}(\mu_j)\psi_s(x).
\]
A neural network is then trained to learn the parameter-to-coefficient map
\[
    \mu \mapsto a_m(\mu).
\]
For a new parameter value $\mu$, the online stage evaluates the learned coefficients, reconstructs the transport signature, maps it back to an approximate potential, differentiates to obtain an approximate transport map,  
%\stnote{${\mathbf{T}}_{n,m}^r$}, \jy{didn't add}
and finally pushes forward the reference distribution:
\[
    \widehat\rho_{n,m}^r(\mu)
    =
    \bigl(\widehat {\mathbf{T}}_{n,m}^r(\mu,\cdot)\bigr)_{\#}\bar\rho .
\]
Thus the final output is again a probability distribution, with mass preservation built into the reconstruction.

The proposed method differs from existing optimal-transport ROMs in several respects.  Wasserstein barycenter approaches typically require the solution of a barycenter problem over a dictionary of measures during reconstruction, whereas our online stage only evaluates a learned coefficient map and pushes forward the reference distribution.  Displacement interpolation methods construct approximations along Wasserstein geodesics between selected snapshots. In contrast, our method builds a global reduced representation in transport-signature space. Similarly, instead of using optimal transport only as a registration or alignment step before applying a classical reduced-basis method, we construct the reduced order model directly from transport signatures associated with the density-valued solution map.  The resulting skeleton decomposition selects representative signatures and yields an interpretable low-rank structure, while the learned parameter-to-coefficient map provides a non-intrusive online evaluation procedure.

Our main theoretical result gives a mean-squared $W_2$ error bound for the reconstructed distribution $\widehat\rho_{n,m}^r(\mu)$ relative to the true distribution $\rho(\mu)$. The bound separates the principal sources of error: the low-rank approximation error in transport-signature space, the effect of spatial discretization and quadrature, the coverage of the parameter samples, and the statistical error from learning the coefficient map. This analysis connects approximation in the Hilbert space of signatures to error in the natural Wasserstein geometry of probability distributions.

We demonstrate the framework on the density-valued continuity equation in two spatial dimensions. The results show that transport signatures can exhibit substantially more favorable low-rank structure than the original density snapshots, leading to accurate reduced-order approximations with a small number of selected transport-based basis.

The rest of the paper is organized as follows. Section~\ref{sec:background} reviews the background material needed for the proposed ROM method, including optimal transport theory, computational aspects of optimal transport, and the skeleton decomposition algorithm. Section~\ref{sec:otrom} presents the transport-signature-based ROM method, and Section~\ref{sec:theory} provides a rigorous theoretical error analysis. Numerical examples are given in Section~\ref{sec:num}, followed by concluding remarks in Section~\ref{sec:conclusions}.
\section{Background}~\label{sec:background}
This section provides essential background for the proposed optimal transport-based reduced-order method.

\subsection{Optimal transport}
Let $\Omega$ be a subset of the Euclidean space and $\mathbb{P}(\Omega)$ be the set of probability distributions on $\Omega$. 
For given $\rho_0,\rho_1\in\mathbb{P}(\Omega)$, the goal of optimal transport is to find the map that moves $\rho_0$ to $\rho_1$ with the least effort. 
Given a cost function $c:\Omega\times\Omega\to\mathbb{R}$, the Monge problem seeks a measurable map $T:\Omega\to\Omega$ solving
\begin{equation}
    \inf_{T} \left\{ \int_{\Omega} c(x,T(x)) \, d \rho_0(x) : T_\#\rho_0=\rho_1 \right\},
    \label{eq:monge}
\end{equation}
where $T_\#\rho_0$ denotes the push-forward measure of $\rho_0$ by $T$. 
More precisely, 
\begin{equation}
    (T_\#\rho_0)(B) = \rho_0(T^{-1}(B)) \quad\text{for any measurable $B\subset\Omega$}
\end{equation}
% More precisely, for $\nu\in\mathbb{P}(\Omega)$, $T_\#\nu$ satisfies
% \begin{equation}
%     \int_{\Omega} h(y)\,d (T_\#\nu)(y)=
%     \int_{\Omega} h(T(x))\,d \nu(x),
%     \qquad \forall h\in C(\Omega).
% \end{equation}
A minimizer of \eqref{eq:monge}, when it exists, is called an \emph{optimal transport map}.

\paragraph{Potentials and transport signature}

Since the Monge problem imposes that mass moves deterministically through a map, an optimal transport map may fail to exist in general. A relaxation is the Kantorovich problem optimizing over transport plans:
\begin{equation}
    \inf_{\pi\in\Pi(\rho_0,\rho_1)}
    \int_{\Omega\times\Omega} c(x,y)\,d\pi(x,y).
\label{eq:kantorovich}
\end{equation}
Here, $\Pi(\rho_0,\rho_1)$ denotes the set of probability measures on $\Omega\times\Omega$ whose marginals are $\rho_0$ and $\rho_1$. 
% Under mild assumptions, the Kantorovich formulation admits minimizers even when an optimal Monge map does not exist.
The Kantorovich formulation gives rise to the \emph{$p$-Wasserstein distance}
\begin{equation}
    W_p(\rho_0,\rho_1):=
    \left( \inf_{\pi\in\Pi(\rho_0,\rho_1)}
    \int_{\Omega\times\Omega} \|x-y\|_p^p\,d\pi(x,y) \right)^{1/p}.
\end{equation}

For the quadratic cost $c(x,y)=\frac{1}{2}\|x-y\|_2^2$, the Kantorovich problem~\eqref{eq:kantorovich} has the following equivalent dual formulation:
\begin{equation}
\sup_{\phi,\eta}
\left\{ \int_{\Omega} \phi(x)\,d\rho_0(x)
+ \int_{\Omega} \eta(y)\,d\rho_1(y):
\phi(x)+\eta(y)\leq \frac{1}{2}\|x-y\|_2^2 \right\}.
\label{eq:kantorovich dual}
\end{equation}
The maximizers $\phi$ and $\eta$, when they exist, are called \emph{Kantorovich potentials}. 

Let $\mathbb{P}_2(\mathbb{R}^d)$ be the set of probability distributions with finite second moment.
A fundamental result in optimal transport (\cite[Thm.~1.22]{Santambrogio2015} \cite{brenier1991polar}) shows that, for the quadratic cost, the optimal transport map exists and is induced by a convex potential function. 

\begin{theorem}[Brenier Theorem]\label{thm:Brenier}
Let $\rho_0,\rho_1\in \mathbb{P}_2(\mathbb{R}^d)$ and assume that $\rho_0\ll dx$. 
For the quadratic cost $c(x,y)=\frac{1}{2}\|x-y\|_2^2$, there exists a $\rho_0$-a.e. unique optimal transport map $T$ pushing $\rho_0$ forward to $\rho_1$. 
Moreover, there exists a convex function $u:\mathbb{R}^d\to\mathbb{R}$, unique up to an additive constant, such that
$$ T=\nabla u, \qquad \rho_0\text{-a.e.}. $$
% Equivalently, the optimal transport map is the gradient of a convex potential.
\end{theorem}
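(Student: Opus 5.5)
We will prove Theorem~\ref{thm:Brenier} through the Kantorovich problem~\eqref{eq:kantorovich} and its dual~\eqref{eq:kantorovich dual}, following the standard route in \cite{Santambrogio2015}. \textbf{Existence of an optimal plan.} The set $\Pi(\rho_0,\rho_1)$ is tight, because its marginals are fixed and tight. It is also weakly closed. The cost $\tfrac12\|x-y\|_2^2$ is continuous and nonnegative, so the functional $\pi\mapsto\int c\,d\pi$ is weakly lower semicontinuous. By Prokhorov's theorem a minimizer $\pi^\ast$ therefore exists, and its cost is finite because $\rho_0,\rho_1\in\mathbb{P}_2$.

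\textbf{Duality and convex potential.} Next we show that the dual~\eqref{eq:kantorovich dual} attains its supremum at a $c$-concave pair $(\phi,\phi^c)$ and that there is no duality gap.
\begin{itemize}
\item One way is to prove that the support of $\pi^\ast$ is $c$-cyclically monotone. Rockafellar's theorem then builds a $c$-concave $\phi$ with $\operatorname{supp}\pi^\ast\subset\{\phi(x)+\phi^c(y)=c(x,y)\}$.
\item For the quadratic cost, $c$-concavity of $\phi$ means exactly that $u(x):=\tfrac12\|x\|^2-\phi(x)$ is convex and lower semicontinuous.
\item The equality $\phi(x)+\phi^c(y)=c(x,y)$ on the support rewrites as $u(x)+u^\ast(y)=x\cdot y$, that is, $y\in\partial u(x)$.
\item The finite second moments let us control integrability, so $\phi\in L^1(\rho_0)$ and $\phi^c\in L^1(\rho_1)$.
\end{itemize}

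\textbf{From plan to map.} A convex function $u$ is locally Lipschitz on the interior of its domain, so Rademacher's theorem makes it differentiable Lebesgue-a.e. there. The boundary of a convex set is Lebesgue-null. Since $\rho_0\ll dx$, $u$ is differentiable $\rho_0$-a.e., and $\rho_0$ gives no mass outside $\operatorname{dom}u$ because $\phi$ is finite $\rho_0$-a.e. At each such point $\partial u(x)=\{\nabla u(x)\}$. Hence $\pi^\ast$ is concentrated on the graph of $T=\nabla u$, which gives $\pi^\ast=(I,\nabla u)_\#\rho_0$ and $T_\#\rho_0=\rho_1$. Because the Monge value is at least the Kantorovich value, $T$ solves~\eqref{eq:monge}.

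\textbf{Uniqueness.} Let $\pi'$ be any other optimal plan.
\begin{itemize}
\item Optimality together with zero duality gap forces $\pi'$ to be concentrated on the same contact set $\{y\in\partial u(x)\}$, since $\int(c-\phi-\phi^c)\,d\pi'=0$ and the integrand is nonnegative.
\item So $\pi'$ is also induced by $\nabla u$, and the optimal map is $\rho_0$-a.e. unique.
\item Suppose $\nabla u=\nabla v$ $\rho_0$-a.e. with both $u,v$ convex. On the connected interior of the support, where the relevant potentials are determined, $u$ and $v$ differ by a constant.
\end{itemize}

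\textbf{Main obstacle.} The hardest step is establishing dual attainment with integrable $c$-concave potentials under only second-moment assumptions, namely the cyclical-monotonicity-to-potential construction and the integrability bookkeeping. We would first prove it for compactly supported measures, where $c$-concave functions are uniformly Lipschitz and Arzelà–Ascoli applies. We would then extend to $\mathbb{P}_2$ using Rockafellar's construction directly, which avoids the compactness argument.

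The uniqueness-up-to-constant claim is the most delicate part of the statement. It genuinely requires a connectedness condition on the support of $\rho_0$, and we would add or interpret the statement accordingly.
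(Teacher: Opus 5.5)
Your proposal is correct and follows the standard textbook proof that the paper relies on. The paper gives no proof of its own and only cites \cite[Thm.~1.22]{Santambrogio2015} and \cite{brenier1991polar}; your route (optimal plan, $c$-cyclical monotonicity plus Rockafellar giving a convex $u$ with $\operatorname{supp}\pi^\ast$ inside the graph of $\partial u$, $\rho_0$-a.e.\ differentiability from $\rho_0\ll dx$, uniqueness through the common contact set) is that standard argument.

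Your caveat on uniqueness of $u$ is justified, since it holds only up to constants on connected pieces of the support and fails off the support. Your extended-valued $u$ is likewise necessary: for $\rho_0$ uniform on $[0,1]$ and $\rho_1$ Gaussian, $T$ is unbounded near $1$, so no finite convex $u$ on $\mathbb{R}$ has $u'=T$ a.e.\ on $(0,1)$, and the paper's $u:\mathbb{R}^d\to\mathbb{R}$ must be read as $u:\mathbb{R}^d\to\mathbb{R}\cup\{+\infty\}$.
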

The convex function $u$ is often called the \emph{optimal transport potential}, also known as the \emph{Brenier potential}.
In particular, $u$ is associated with the Kantorovich potential $\phi$ under the assumptions of Brenier's theorem (Theorem~\ref{thm:Brenier}) via 
\begin{equation}
    u(x)=\frac{1}{2}\|x\|_2^2-\phi(x).
\label{eq:kan-bre}
\end{equation}

%\yy{I think it is a good idea. Probably this is a good place to bring up and emphasize this $\phi$ is what we refer to as transport signature in our ROM method.}\jy{added below}

Let $(\mathbb{P}_2(\mathbb{R}^d),W_2)$ denote the Wasserstein-2 space.
At an absolutely continuous measure $\bar{\rho}$, one can define the tangent cone to $\bar{\rho}$ in $(\mathbb{P}_2(\mathbb{R}^d),W_2)$ using optimal transport maps following~\cite[Eq.~8.5.1]{AGS2008gradflow}.
\begin{equation}
    \operatorname{Tan}_{\bar{\rho}} \mathbb{P}_2(\mathbb{R}^d)
    := \overline{ \{ \lambda(T-I): \text{$T$ is the optimal transport map from $\bar{\rho}$ to $T_\#\bar{\rho}$}, \; \lambda>0 \} }^{L^2(\bar{\rho};\mathbb{R}^d)}
\end{equation}
According to \cite[Thm.~8.5.1]{AGS2008gradflow}, the tangent cone is in fact a linear subspace of $L^2(\bar{\rho};\mathbb{R}^d)$:
\begin{equation}
    \operatorname{Tan}_{\bar{\rho}} \mathbb{P}_2(\mathbb{R}^d)
    = \overline{\{\nabla \varphi : \varphi \in C_c^\infty(\mathbb R^d)\}}^{\,L^2(\bar{\rho};\mathbb{R}^d)}. 
\end{equation}
This characterization motivates the construction of a reduced-order model in the tangent space. 
Since the optimal transport displacement satisfies $T-I=-\nabla\phi$, where $\phi$ is a Kantorovich potential, the transport information is encoded in $\phi$. We therefore use Kantorovich potentials to derive transport signatures and construct the reduced-order model.

% The Riemannian metric on the tangent space is induced by the $L^2(\mu)$ inner product,
% $$\langle v_1,v_2\rangle_{T_{\bar{\rho}}\mathbb{P}_2(\mathbb{R}^d)}
%   =\int_{\mathbb R^d} v_1(x)\cdot v_2(x)\,d\bar{\rho}(x),$$
% which reflects the kinetic-energy structure underlying optimal transport.
% A tangent vector $v\in T_\mu\mathcal P_2$ represents the instantaneous velocity of a curve of probability measures $(\rho_t)$ through the continuity equation
% $$\partial_t\mu_t+\nabla\cdot(v_t\mu_t)=0.$$

% \paragraph{Computation: the 1D explicit formula}
% XXXX

\paragraph{Computation: the back-and-forth method}
In this work, we also compute the optimal transport map using the back-and-forth method of \citep{bfm} when the PDE solution is dimension bigger than one. It is based on the Kantorovich dual formulation~\eqref{eq:kantorovich dual}.

Define the $c$-transform of $\phi$ and $\eta$ as
\begin{equation}
\phi^c(y):=\inf_{x\in\Omega}\left\{c(x,y)-\phi(x)\right\},
\qquad \eta^c(x):=\inf_{y\in\Omega}\left\{c(x,y)-\eta(y)\right\},
\end{equation}
Instead of optimizing over all pairs $(\phi,\eta)$ satisfying the inequality constraint, one can optimize over a single potential at a time: if $\eta$ is fixed, then the best admissible choice of $\phi$ is $\phi=\eta^c$, and vice versa.
More concretely, define the reduced dual objectives
\begin{equation}
\mathcal{I}(\phi):=
\int_{\Omega} \phi(x)\,d\rho_0(x)
+\int_{\Omega} \phi^c(y)\,d\rho_1(y),
\qquad \mathcal{J}(\eta):=
\int_{\Omega} \eta^c(x)\,d\rho_0(x)
+\int_{\Omega} \eta(y)\,d\rho_1(y).
\end{equation}
The method moves ``back and forth’’ between the two dual variables, using the $c$-transform to maintain admissibility while using gradient ascent to increase the dual objective.
Given a step size $\sigma>0$, one iteration has the form
\begin{align}
    &\phi_{n+\frac{1}{2}}
    =\phi_{n}+\sigma \nabla_{\dot H^1} \mathcal{I}(\phi_{n}), 
    \qquad\qquad  \eta_{n+\frac{1}{2}}
    =\phi_{n+\frac{1}{2}}^c,\\
    &\eta_{n+1}
    =\eta_{n+\frac{1}{2}}+\sigma \nabla_{\dot H^1} \mathcal{J}(\eta_{n+\frac{1}{2}}), 
    \qquad \phi_{n+1}
    =\eta_{n+1}^c.
\end{align}
Denote $T_\phi=I-\nabla \phi^c$ and $T_\eta=I-\nabla \eta^c$, the homogeneous $\dot H^1$ gradients are computed by
\begin{equation}
    \nabla_{\dot H^1}\mathcal{I}(\phi)
    =(-\Delta)^{-1} \left(\rho_0-(T_\phi)_\#\rho_1\right),
    \qquad \nabla_{\dot H^1}\mathcal{J}(\eta)
    =(-\Delta)^{-1} \left(\rho_1-(T_\eta)_\#\rho_0\right).
\end{equation}
The update increases the potential in regions where the current map transports too little mass and decreases it where too much mass is transported. The inverse Laplacian acts as a preconditioner, producing a smooth correction to the potential.

From a computational point of view, the method is efficient on regular grids. Each iteration requires two main operations: computing $c$-transforms and solving Poisson-type equations arising from the $\dot H^1$ gradient. For quadratic costs on grid-based discretizations, the $c$-transforms can be computed in nearly linear time, while the inverse Laplacian can be applied efficiently using fast Fourier transforms or related elliptic solvers. As a result, for $m$ grid points, the method has approximately $O(m\log m)$ computational cost per iteration and $O(m)$ memory usage. This makes the back-and-forth method suitable for large-scale problems where one needs not only the Wasserstein distance but also an accurate approximation of the optimal transport map $T$.

% \begin{figure}
%     \centering
%     \begin{tikzpicture}
%         % Define the nodes
%         \node (a) at (-3,0) {$\rho_0$};
%         \node (b) at (3,0) {$\rho_1$};
    
%         % Draw the directed edges with slight vertical shifts
%         % Draw the directed edges with slight vertical offsets
%         \draw[->] (-2.5,0.2) -- (2.5,0.2) node[midway, above] {$T={\rm id}-\nabla\psi_0$};
%         \draw[->] (2.5,-0.2) -- (-2.5,-0.2) node[midway, below] {$T^{-1}={\rm id}-\nabla\psi^c_0$};
%     \end{tikzpicture}
%     \caption{Optimal transport between $\rho_0$ and $\rho_1$}
%     \label{fig:rho0 to rho1}
% \end{figure}

% \begin{figure}[h]
%     \centering
%     \begin{tikzpicture}
%         % Define the nodes
%         \node (a) at (-4,0) {reference density $\bar{\rho}$};
%         \node (b) at (4,0) {solution density $\rho$};
    
%         % Draw the directed edges with slight vertical shifts
%         % Draw the directed edges with slight vertical offsets
%         \draw[->] (-2,0.2) -- (2,0.2) node[midway, above] {$T={\rm id}-\nabla\psi^c$};
%         % \draw[->] (2,-0.2) -- (-2,-0.2) node[midway, below] {$\Phi:=T^{-1}={\rm id}-\nabla\psi$};
%     \end{tikzpicture}
%     \caption{Optimal transport between $\bar{\rho}$ and $\rho$}
%     \label{fig:rhobar to rho}
% \end{figure}

\subsection{Skeleton decomposition}

Next, let us introduce the skeleton decomposition. Given a matrix $A \in \mathbb{R}^{m\times n}$, letting  $I = \{i_1,\cdots,   i_r\}$ and $J = \{j_1, \cdots,  j_r\}$  be the row and column indices,   we denote by $A_{I,:}$  all the rows of  $A$ with  row indices in $I$ and $A_{:,J}$  the all the columns of $A$ with column indices in $J$. Assume that the submatrix $A_{I;J}$ situated on rows $I$ and columns $J$ is nonsingular. Then 
\begin{equation}
\label{skeleton}
A_{:,J} A_{I,J}^{-1} A_{I,:} \approx A
\end{equation}
is called a matrix skeleton or cross approximation~\cite{goreinov1997theory,goreinov2001maximal,goreinov2010good}. Note that the left-hand side of~\eqref{skeleton} is also called CUR decomposition in the literature. One notable advantage of skeleton decomposition over classical SVD is interpretability. While SVD expresses data in an abstract basis (e.g., eigenspaces), obscuring the physical meaning of the original rows and columns, skeleton decomposition retains actual rows and columns of the matrix, preserving their physical interpretation. Additionally, SVD becomes computationally prohibitive for large-scale problems, whereas skeleton decomposition offers a more scalable alternative.

More recently, it is shown in \citep{ALS} that when $A_{I,J}$ has the maximal volume (absolute value of the determinant) among all $r \times r$ submatrices of $A$, then the Chebyshev (maximum) norm of the residual matrix satisfies
    \begin{equation}
\label{myskapp}
\left\|A - A_{:,J}A_{I,J}^{-1}A_{I,:}\right\|_C \le \frac{r+1}{\sqrt{\sum_{k=1}^{r+1}\sigma_k^{-2}(A)}},
\end{equation}
where $\sigma_{k}(A)$ is the $k$-th singular value of $A$.
%\stnote{maybe better to use $k$ instead of $r+1$ for a general sense}. \jy{changed}

Algorithmically, a greedy-based iterative approach was proposed in \citep{ALS} to effectively select the rows and columns based on the maximal volume criterion. We summarize it in Algorithm~\ref{2dmvalg}.

For our purpose, it suffices to select indices along only one direction (either rows or columns, as will be explained in Section~\ref{sec:otrom}). We therefore specialize Algorithm~\ref{2dmvalg} to the symmetric matrix setting, where the same indices are selected for both rows and columns. The new procedure for selecting symmetric indices based on the maximal volume criterion is summarized in Algorithm~\ref{2dmvalg_symm}.

\begin{algorithm}[H]
\caption{Skeleton Decomposition via Maximal Volume \label{2dmvalg}}
\begin{algorithmic}[1]
\REQUIRE $n \times m$ matrix $M$, $r\times r$ submatrix 
$A_0$ with $\det(A_0) \neq 0$, tolerance 
$\epsilon > 0$, $l=0$, $b_{ij} = \infty$ and $A_l=A_0$
\ENSURE $A_l$ a close to dominant submatrix with indices $(I_l,J_l)$ in $M$
% \STATE{{\bf  Input:} $n \times m$ matrix $M$, $r\times r$ submatrix 
% $A_0$ with $\det(A_0) \neq 0$, tolerance 
% $\epsilon > 0$, $l=0$, $b_{ij} = \infty$ and $A_l=A_0$.}
%\STATE{While $|b_{ij}| > 1 + \epsilon$}
\WHILE{$|b_{ij}| > 1 + \epsilon$}
\STATE{Let $B_l = M(:,J_l)A_l^{-1}$, and $C_l = A_l^{-1}M(I_l,:)$}
\STATE{Set $b_{ij}$ equal to the largest in modulus entry of both $B_l$ and $C_l$}
% \STATE{If $b_{ij}$ is from $B_l$, replace the $j$th row of $A_l$ with the $i$th row of $M(:,J_l)$}
\IF{$b_{ij}$ is from $B_l$}
\STATE{Replace the $j$th row of $A_l$ with the $i$th row of $M(:,J_l)$}
\ENDIF
% \STATE{If $b_{ij}$ is from $C_l$, replace the $i$th column of $A_l$ with the $j$th column of $M(I_l,:)$}
\IF{If $b_{ij}$ is from $C_l$}
\STATE{Replace the $i$th column of $A_l$ with the $j$th column of $M(I_l,:)$}
\ENDIF
\STATE{  $l := l+1$}
\ENDWHILE
% \STATE{{\bf Output:} $A_l$ a close to dominant submatrix with indices $(I_l,J_l)$ in $M$.}
\end{algorithmic}
\end{algorithm}

\begin{algorithm}[H]
\caption{Symmetric Skeleton Decomposition via Maximal Volume \label{2dmvalg_symm}}
\begin{algorithmic}[1]
\REQUIRE $n \times n$ symmetric matrix $M$, $r\times r$ submatrix 
$A_0$ with $\det(A_0) \neq 0$, tolerance 
$\epsilon > 0$, $l=0$, $b_{ij} = \infty$ and $A_l=A_0$
\ENSURE $A_l$ a close to dominant submatrix with both row and column indices $I_l$ for $M$
% \STATE{{\bf  Input:} $n \times m$ matrix $M$, $r\times r$ submatrix 
% $A_0$ with $\det(A_0) \neq 0$, tolerance 
% $\epsilon > 0$, $l=0$, $b_{ij} = \infty$ and $A_l=A_0$.}
%\STATE{While $|b_{ij}| > 1 + \epsilon$}
\WHILE{$|b_{ij}| > 1 + \epsilon$}
\STATE{Let $B_l = M(:,I_l)A_l^{-1}$, and $C_l = A_l^{-1}M(I_l,:)$}
\STATE{Set $b_{ij}$ equal to the largest in modulus entry of both $B_l$ and $C_l$}
% \STATE{If $b_{ij}$ is from $B_l$, replace the $j$th row of $A_l$ with the $i$th row of $M(:,J_l)$}
\IF{$b_{ij}$ is from $B_l$}
\STATE{Replace the $j$th row of $A_l$ with the $i$th row of $M(:,I_l)$}
\STATE{Replace the $j$th column of $A_l$ with the $i$th column of $M(:,I_l)$}
\ENDIF
% \STATE{If $b_{ij}$ is from $C_l$, replace the $i$th column of $A_l$ with the $j$th column of $M(I_l,:)$}
\IF{If $b_{ij}$ is from $C_l$}
\STATE{Replace the $i$th column of $A_l$ with the $j$th column of $M(I_l,:)$}
\STATE{Replace the $i$th row of $A_l$ with the $j$th row of $M(I_l,:)$}
\ENDIF
\STATE{  $l := l+1$}
\ENDWHILE
% \STATE{{\bf Output:} $A_l$ a close to dominant submatrix with indices $(I_l,J_l)$ in $M$.}
\end{algorithmic}
\end{algorithm}

\section{ROM based on optimal transport}
\label{sec:otrom}
%\yy{@Jiajia, divide our method into three parts: (1) transport signature extraction; (2) build ROM offline and (3) online stage}

Let $\mu$ denote the collection of parameters, including time when applicable, and suppose that $\mu$ has law $Q$. We assume that $Q$ is supported on a compact, connected, $d$-dimensional Riemannian submanifold $\mathcal{M}\subseteq [-b,b]^D$, with $d<D$, and that $Q$ admits a density with respect to the Riemannian volume measure on $\mathcal{M}$ that is bounded above and below by positive constants. For each $\mu\in\mathcal{M}$, let $\rho(\mu)$ denote the probability measure induced by the corresponding parameterized partial differential equation. Let
\[
\Omega := [-\xi,\xi]^{D_{\bar{\rho}}},
\]
and assume that $\operatorname{supp}(\rho(\mu))\subseteq \Omega$ for every $\mu\in\mathcal{M}$. Throughout this section, $dx$ denotes the normalized Lebesgue measure on $\Omega$. Finally, we assume that the reference distribution $\bar{\rho}\in\mathcal{P}(\Omega)$ is absolutely continuous with respect to $dx$; by a standard abuse of notation, its density is again denoted by $\bar{\rho}$.

\begin{figure}[htb]
    \centering
    \includegraphics[width=1.0\textwidth]{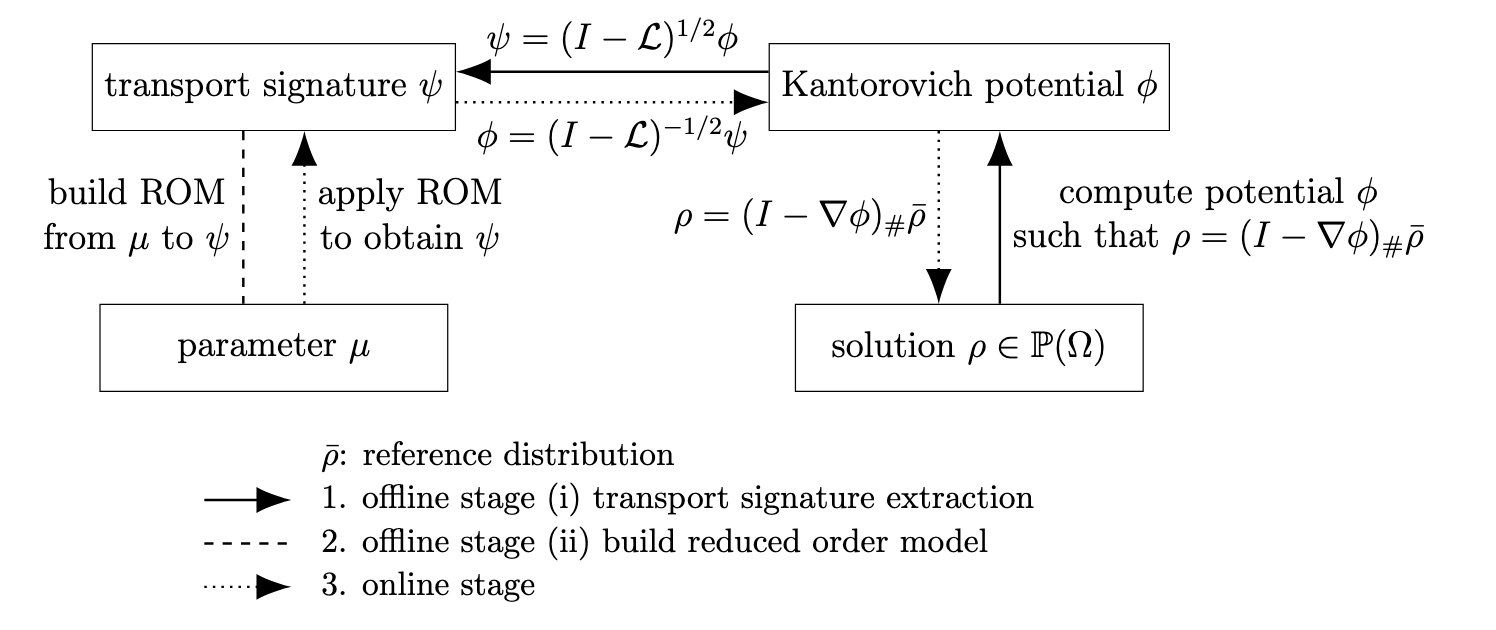}
\caption{Illustration of the OT-based reduced-order model.%\stnote{the text and the algorithm 3 only uses $\mathbf{\Psi}$ not $\psi$. might be helpful to add one description sentence in caption connecting these two. }\jy{not sure how to do this in a good way; unchanged}
}
\label{fig:illu rom}
\end{figure}

Figure~\ref{fig:illu rom} and Algorithm~\ref{algROMskeleton} summarize the proposed reduced-order modeling framework. Instead of constructing a reduced-order model directly for the solution distributions $\rho(\mu)$, we first map these distributions into a space of transport signatures. For each parameter value $\mu$, we compute the optimal transport potential $\phi(\mu)$ from the reference distribution $\bar{\rho}$ to $\rho(\mu)$, and use this potential to define the transport signature $\mathbf{\Psi}(\mu,\cdot)$, as detailed later in this section. This representation is motivated by the observation that, for transport-dominated problems, transport signatures often possess a substantially more favorable low-rank structure than the original distributions, leading to more efficient reduced-order approximations.

Our OT-based ROM procedure consists of three main stages:
\begin{enumerate}
    \item \emph{Offline stage (\romannum{1}) } (Steps 1 - 3 in Algorithm~\ref{algROMskeleton}): extracting transport signatures $\mathbf{\Psi}(\mu,\cdot)$ from the probability distributions $\rho(\mu)$;
    \item \emph{Offline stage (\romannum{2})} (Steps 4 - 7 in Algorithm~\ref{algROMskeleton}): constructing a reduced-order model in the resulting transport-signature space; and
    \item \emph{Online stage} (Step 8 in Algorithm~\ref{algROMskeleton}): predicting the transport signature $\mathbf{\Psi}(\mu,\cdot)$ associated with a new parameter value $\mu$, and reconstructing the target probability distribution $\rho(\mu)$ from the predicted signature.
\end{enumerate}
The remainder of this section describes these steps in detail.

Table~\ref{tab:ot-rom-notation} summarizes key notations in our algorithm and analysis. When sample splitting is not used in computations, the same offline parameter set may be used both for skeleton selection and for training the coefficient map; the split notation is introduced only for the theoretical analysis.

\subsection{Offline stage (\romannum{1}): signature extraction}
\label{subsec: off1}

For each $\mu\in\mathcal{M}$, let $\phi(\mu)$ be the Kantorovich potential (see Theorem~\ref{thm:Brenier}) for the optimal transport from $\bar{\rho}$ to $\rho(\mu)$, normalized by
\[
    \int_\Omega \phi(\mu)(x)\,dx=0.
\]
Thus, for the quadratic transport cost, the optimal transport map pushing $\bar{\rho}$ forward to $\rho(\mu)$ is
\[
    T_{\bar{\rho}\to\rho(\mu)}=I-\nabla \phi(\mu).
\]
We collect these Kantorovich potentials and maps into the parameter-dependent functions
\begin{equation}
    \mathbf{\Phi}:\mathcal{M}\times\Omega\to\mathbb{R},
    \qquad 
    \mathbf{\Phi}(\mu,\cdot):=\phi(\mu),
\end{equation}
and
\begin{equation}
    \mathbf{T}:\mathcal{M}\times\Omega\to\mathbb{R}^{D_{\bar{\rho}}},
    \qquad 
    \mathbf{T}(\mu,\cdot):=T_{\bar{\rho}\to\rho(\mu)}.
\end{equation}
By construction,
\[
    \mathbf{Id}_x-\nabla_x \mathbf{\Phi}=\mathbf{T}.
\]

We define the weighted Laplacian acting on functions of $x$ by
\begin{equation}\label{eq:L}
    \mathcal{L}f := \nabla_x\cdot(\bar{\rho}\,\nabla_x f),
\end{equation}
equipped with a self-adjoint realization on $L^2(\Omega,dx)$ corresponding to boundary conditions for which the integration-by-parts boundary term vanishes. With this sign convention, $\mathcal{L}$ is nonpositive, and hence $I-\mathcal{L}$ is positive and self-adjoint on $L^2(\Omega,dx)$. Therefore, $(I-\mathcal{L})^{1/2}$ is well-defined by the spectral theorem. Moreover,
\begin{equation}\label{eq:H1seminormbd}
    \|(I-\mathcal L)^{1/2}f\|_{L^2(\Omega,dx)}^2
    =
    \|f\|_{L^2(\Omega,dx)}^2
    +
    \|\nabla_x f\|_{L^2(\Omega,\bar\rho)}^2
    \geq
    \|\nabla_x f\|_{L^2(\Omega,\bar\rho)}^2.
\end{equation}
Equivalently,
\[
    \|(I-\mathcal L)^{1/2}f\|_{L^2(\Omega,dx)}
    \geq
    \|\nabla_x f\|_{L^2(\Omega,\bar\rho)}.
\]

We define the \emph{transport signature} by
\begin{equation}\label{eq:Psi}
    \mathbf{\Psi}:\mathcal{M}\times\Omega\to\mathbb{R},
    \qquad
    \mathbf{\Psi}:=(I-\mathcal{L})^{1/2}\mathbf{\Phi}.
\end{equation}
The reduced-order model is then constructed in the transport-signature space, using $\mathbf{\Psi}$ rather than $\mathbf{\Phi}$. This choice is useful because approximation errors in $\mathbf{\Psi}$ directly control the corresponding $L^2(\Omega,\bar{\rho})$ errors in the transport maps $\mathbf{T}=\mathbf{Id}_x-\nabla_x\mathbf{\Phi}$.

\begin{algorithm}[t]
\caption{ROM via Skeleton Decomposition} \label{algROMskeleton}
\begin{algorithmic}[1]
\REQUIRE PDE solution distributions $\rho(\mu_j)$ with parameters $\{\mu_j\}_{j=1}^{n}$, reference distribution $\bar{\rho}$, spatial grid $\{x_k\}_{k=1}^m$, unseen parameter $\mu$, and target rank $r$.
\ENSURE Approximation $\widehat{\rho}_{n,m}^r(\mu)$ to the solution distribution $\rho(\mu)$.

\STATE{Compute the Kantorovich potential $\mathbf{\Phi}(\mu_j,\cdot)$ for which the optimal transport map satisfies 
$T_{\bar \rho \rightarrow \rho(\mu_j)} = I-\nabla_x \mathbf{\Phi}(\mu_j,\cdot)$, for $j=1,\ldots,n$.}

\STATE{Let $\mathbf{\Psi}(\mu_j,\cdot):=(I-\mathcal{L})^{1/2}\mathbf{\Phi}(\mu_j,\cdot)$, for $j=1,\ldots,n$.}

\STATE{Form the sampled signature matrix $\Psi\in\mathbb{R}^{n\times m}$ with entries
$\Psi_{j,k}:=\mathbf{\Psi}(\mu_j,x_k)$.}

\STATE{$\Psi\Psi^{\top}\xrightarrow{\text{symmetric skeleton decomposition (Algorithm \ref{2dmvalg_symm})}}\text{row indices }\{i_s\}_{s=1}^{r}$, and let $\psi_{s}(\cdot):=\mathbf{\Psi}(\mu_{i_s},\cdot)$}

\STATE{For $j=1,\cdots,n$, find the grid-based coefficients $a_m(\mu_j)\in\mathbb{R}^r$ such that 
\[
\mathbf{\Psi}(\mu_j,\cdot)
\approx
\sum_{s=1}^{r} a_{m,s}(\mu_j)\psi_{s}(\cdot)
\]
by least-squares fitting on the spatial grid $\{x_k\}_{k=1}^m$.} 

\STATE{Learn the parameter-to-coefficient map 
$\mu\mapsto a_{m}(\mu)$ empirically via a feed-forward neural network to obtain the empirical risk minimizer 
$\widehat{a}_m$.}
\STATE{Form the rank-$r$ model
\[
\widehat{\mathbf{\Psi}}_{n,m}^{r}(\mu,x)
:=
\sum_{s=1}^{r}\widehat{a}_{m,s}(\mu)\,\psi_{s}(x),
\]
and define $
\widehat{\mathbf{\Phi}}_{n,m}^r
:=
(I-\mathcal L)^{-1/2}\widehat{\mathbf{\Psi}}_{n,m}^r$ and $
\widehat{\mathbf{T}}_{n,m}^r(\mu,x)
= x-\nabla_x \widehat{\mathbf{\Phi}}_{n,m}^r(\mu,x)$.
}
\STATE{Obtain the approximation to the density at parameter $\mu$ by $\widehat{\rho}_{n,m}^r(\mu)
:=(\widehat{\mathbf{T}}_{n,m}^r(\mu,\cdot))_{\#}\bar\rho$.}

\end{algorithmic}
\end{algorithm}

\begin{remark}[Sample splitting in the analysis] Algorithm~\ref{algROMskeleton} is written using a single offline data set for simplicity. In the theoretical analysis below, we use a sample-splitting convention: one parameter sample is used to select the skeleton basis, and an independent parameter sample is used to train the coefficient map. This separates the deterministic basis-selection error from the statistical regression error. In computations, the same data set may be reused for both tasks, but the theorem is stated for the split-sample version. 
\end{remark}

\subsection{Offline stage (\romannum{2}): reduced-order model construction}
\label{subsec: off2}

Throughout this paper, we regard $\mathbf{\Psi}$ as a continuous matrix, or cmatrix, whose rows are indexed by $\mu\in\mathcal{M}$ and whose columns are indexed by $x\in\Omega$. Equivalently, $\mathbf{\Psi}$ is the kernel of the linear operator
\begin{equation}\label{eq:H}
H_\Psi : L^2(\Omega,dx) \rightarrow L^2(\mathcal{M},Q),
\qquad
(H_\Psi f)(\mu)=\int_\Omega \mathbf{\Psi}(\mu,x) f(x)\, dx.
\end{equation}
For a fixed parameter value $\mu\in\mathcal M$, the function $\mathbf{\Psi}(\mu,\cdot)\in L^2(\Omega,dx)$ is viewed as a row of the cmatrix. Similarly, for a fixed spatial point $x\in\Omega$, the function $\mathbf{\Psi}(\cdot,x)\in L^2(\mathcal{M},Q)$ is viewed as a column. For index sets $I \subset\mathcal{M}$ and $J \subset\Omega$, the restrictions $\mathbf{\Psi}(I,\cdot)$ and $\mathbf{\Psi}(\cdot,J)$ play the role of row and column quasi-matrices in the sense of \citep{TownsendTrefethen2015}. Hereafter, we will abuse the notation to use $I$ and $J$ to denote both the index sets and the corresponding point subsets in $\mathcal{M}$ and $\Omega$, respectively.

\paragraph{Reduced-order basis}
Let $\nu_1,\ldots,\nu_r\in\mathcal M$ be selected parameter values. We form a reduced-order basis from the corresponding rows of $\mathbf{\Psi}$ by defining
\begin{equation}\label{eq:psi}
    \psi:\Omega\to\mathbb{R}^r,
    \qquad
    \psi(x):=
    \begin{pmatrix}
    \psi_1(x)\\
    \vdots\\
    \psi_r(x)
    \end{pmatrix},
    \qquad
    \psi_s(\cdot):=\mathbf{\Psi}(\nu_s,\cdot),
    \quad s=1,\ldots,r.
\end{equation}
For each $\mu\in\mathcal M$, we define the best approximation of $\mathbf{\Psi}(\mu,\cdot)$ from the selected row span by least-squares projection. Let
\begin{equation}\label{eq:Gb_cont}
    G:=\int_\Omega \psi(x)\psi(x)^\top\,dx\in\mathbb{R}^{r\times r},
    \qquad
    b(\mu):=\int_\Omega \mathbf{\Psi}(\mu,x)\psi(x)\,dx\in\mathbb{R}^r .
\end{equation}
Assuming that $G$ is invertible, define
\begin{equation}\label{eq:ai_star}
    a^*(\mu):=G^{-1}b(\mu)
    =
    \begin{pmatrix}
    a^*_1(\mu)\\
    \vdots\\
    a^*_r(\mu)
    \end{pmatrix}.
\end{equation}
The resulting rank-$r$ approximation is
\begin{equation}\label{eq:r-approx_star}
    \mathbf{\Psi}^{*r}(\mu,x)
    :=a^*(\mu)^\top\psi(x)
    =
    \sum_{s=1}^{r} a^{*}_{s}(\mu)\psi_s(x).
\end{equation}
Equivalently, for each fixed $\mu\in\mathcal M$, $\mathbf{\Psi}^{*r}(\mu,\cdot)$ is the $L^2(\Omega,dx)$-orthogonal projection of $\mathbf{\Psi}(\mu,\cdot)$ onto
\[
    \operatorname{span}\{\psi_1,\ldots,\psi_r\}.
\]
% The Lipschitz continuity of $a^*(\cdot)$ follows from Assumption~\ref{assump2} and the invertibility of $G$.

In practice, $\mathbf{\Psi}$ is observed only at finitely many parameter samples and spatial discretization points. Specifically, suppose that the training parameters $\{\mu_j\}_{j=1}^n$ are drawn independently from $Q$, and that $\{x_k\}_{k=1}^m$ is a regular or quasi-uniform grid on $\Omega$. These samples define the finite matrix
\begin{equation}\label{eq:data_matrix}
    \Psi\in\mathbb{R}^{n\times m},
    \qquad
    \Psi_{j,k} := \mathbf{\Psi}(\mu_j,x_k),
    \qquad
    j=1,\ldots,n,\quad k=1,\ldots,m.
\end{equation}
We select row indices
\[
    I=\{i_1,\ldots,i_r\}\subset\{1,\ldots,n\},
\]
for example by a max-volume row-selection procedure \citep{ALS}, and set
\[
    \nu_s:=\mu_{i_s},
    \qquad s=1,\ldots,r.
\]
Let $\Psi_I\in\mathbb{R}^{r\times m}$ denote the matrix formed from the selected rows of $\Psi$, so that
\[
    (\Psi_I)_{s,k}=\Psi_{i_s,k}=\psi_s(x_k).
\] 

The grid-based Gram matrix and right-hand side are then
\begin{equation}\label{eq:Gmbm}
    G_m
    :=
    \frac1m \Psi_I\Psi_I^\top
    =
    \frac1m\sum_{k=1}^m \psi(x_k)\psi(x_k)^\top
    \in\mathbb{R}^{r\times r},
\end{equation}
and
\begin{equation}
    b_m(\mu)
    :=
    \frac1m\sum_{k=1}^m \mathbf{\Psi}(\mu,x_k)\psi(x_k)
    \in\mathbb{R}^{r}.
\end{equation}
Assuming that $G_m$ is invertible, define the grid-based coefficient map
\begin{equation} \label{eq:a_m}
    a_m:\mathcal{M}\to\mathbb{R}^r,
    \qquad
    a_m(\mu):=G_m^{-1}b_m(\mu).
\end{equation}
This gives the grid-based rank-$r$ approximation
\begin{equation}\label{eq:r-approx_disc}
    \mathbf{\Psi}^{r}_m(\mu,x)
    :=
    a_m(\mu)^\top\psi(x)
    =
    \sum_{s=1}^{r} a_{m,s}(\mu)\psi_s(x).
\end{equation}

We also quantify the coverage of the parameter samples and the spatial discretization by the corresponding fill distances $h_\mu$ and $h_x$:
\begin{equation}\label{eq:fill_distance}
    h_\mu(n):=\sup_{\mu\in\mathcal M}\min_{1\le j\le n}d_{\mathcal M}(\mu,\mu_j),
    \qquad
    h_x(m):=\sup_{x\in\Omega}\min_{1\le k\le m}\|x-x_k\|_2.
\end{equation}
The discrepancy between the continuous $L^2(\Omega,dx)$ projection $\mathbf{\Psi}^{*r}$ and the grid-based projection $\mathbf{\Psi}^{r}_m$ is controlled by the quadrature assumption in Assumption~\ref{assump3} and by the spatial fill distance.

\paragraph{Parameter-to-coefficient map}
For each sampled parameter $\mu_j$, the coefficient vector $a_m(\mu_j)$ can equivalently be computed by solving the discrete least-squares problem
\begin{equation}\label{eq:lsqa}
    a_m(\mu_j)
    =
    \argmin_{\widetilde a\in\mathbb{R}^r}
    \frac1m
    \sum_{k=1}^m
    \left|
    \mathbf{\Psi}(\mu_j,x_k)
    -
    \widetilde a^\top\psi(x_k)
    \right|^2
    =
    \argmin_{\widetilde a\in\mathbb{R}^r}
    \frac1m
    \left\|
    \Psi_{j,:}
    -
    \widetilde a^\top \Psi_I
    \right\|_2^2 .
\end{equation}
We collect these coefficient vectors in the matrix
\begin{equation}
    A\in\mathbb{R}^{n\times r},
    \qquad
    A_{j,:}=a_m(\mu_j)^\top,
    \qquad
    j=1,\ldots,n.
\end{equation}

To evaluate the reduced-order model at a new parameter value, we approximate the parameter-to-coefficient map $a_m$ by a neural network
\[
    \widehat a:\mathcal{M}\to\mathbb{R}^r.
\]
Specifically, we seek $\widehat a$ in the class
\begin{equation} \label{NN}
\mathcal{N}_r(R,\kappa,L,p,K):=
\left\{S:\mathcal M\to\mathbb R^r \middle |\; 
\begin{aligned}
& S \text{ is representable by a feedforward neural network } \\ 
& \text{ with at most $L$ layers and width at most $p$, } \|S\|_{\infty}\leq R,\; \\
% & \text{and width at most $p$},\\
& 
\|W_\ell\|_{\infty,\infty}\leq\kappa,\;
\|b_\ell\|_{\infty}\leq\kappa,\;
\sum_{\ell=1}^L(\|W_\ell\|_0+\|b_\ell\|_0)\leq K    
\end{aligned}
\right\}.
\end{equation}
The neural network is trained by empirical regression against the computed coefficient vectors:
\begin{equation} 
\label{eq:hata}
    \widehat {a}_m
    :=
    \argmin_{\widetilde a\in\mathcal{N}_r(R,\kappa,L,p,K)}
    \frac{1}{n}\sum_{j=1}^n
    \big\|
    \widetilde a(\mu_j)
    -
    a_m(\mu_j)
    \big\|_2^2.
\end{equation}
The final reduced-order approximation of the transport signature is therefore
\begin{equation}\label{emprmin}
    \widehat{\mathbf{\Psi}}_{n,m}^{r}(\mu,x)
    :=
    \widehat a_m(\mu)^\top \psi(x)
    =
    \sum_{s=1}^{r}\widehat{a}_{m,s}(\mu)\,\psi_s(x).
\end{equation}

\subsection{Online stage}
\label{subsec: online}

Given the reduced-order approximation $\widehat{\mathbf{\Psi}}_{n,m}^r$ of the transport signature, we reconstruct an approximation of the target distribution $\rho(\mu)$ by pushing forward the reference distribution $\bar\rho$. Specifically, we define
\begin{equation}\label{eq:rho_lowrank}
    \widehat{\rho}_{n,m}^r(\mu)
    :=
    \bigl(\widehat{\mathbf{T}}_{n,m}^r(\mu,\cdot)\bigr)_{\#}\bar\rho,
\end{equation}
where
\begin{equation}\label{eq:T_lowrank}
    \widehat{\mathbf{T}}_{n,m}^r(\mu,x)
    =
    x-\nabla_x \widehat{\mathbf{\Phi}}_{n,m}^r(\mu,x),
    \qquad
    \widehat{\mathbf{\Phi}}_{n,m}^r
    :=
    (I-\mathcal L)^{-1/2}\widehat{\mathbf{\Psi}}_{n,m}^r .
\end{equation}
Thus, the reduced-order model for the distribution is obtained by first mapping the predicted transport signature back to a transport potential, then differentiating to obtain the transport map, and finally pushing forward the reference distribution.

% The mean-squared $2$-Wasserstein distance between the empirical distribution $\hat{\rho}_{n,m}^r(\mu)$ and the underlying true distribution $\rho(\mu)$ is bounded by
% Theorem~\ref{mainthm} below.

% Since only the discrete evaluations $\{\Psi_{j,k}\}$ are available in
% practice, the empirical model is constructed using the sampled grid
% $\{x_k\}_{k=1}^m$. In the analysis, however, we interpret
% $\psi_s(x)=\mathbf{\Psi}(\mu_{i_s},x)$ as the underlying continuous basis
% function. The discrepancy between the continuous $L^2(\Omega,dx)$ projection and its
% grid-based version is controlled by the quadrature assumption below, and hence
% by the spatial fill distance $h_x(m)$.

\begin{longtable}{ 
p{.12\textwidth}  p{.88\textwidth} } 
% \centering
% \small
% \renewcommand{\arraystretch}{1.15}
% \begin{tabularx}{\textwidth}{@{}lX@{}}
\toprule
\textbf{Notation} & \textbf{Meaning} \\
\midrule

\multicolumn{2}{@{}l}{\emph{Parameterized distributions and reference measure}} \\
\addlinespace[2pt]

$\mu$ & Collection of parameters, including time when applicable. \\

$Q$ & Probability law of $\mu$ on the parameter space $\mathcal M$. \\

$\mathcal M$ & Compact, connected, $d$-dimensional Riemannian submanifold of $[-b,b]^D$. \\

$\tau_{\mathcal M}$ & Reach of the manifold $\mathcal{M}$. \\

$d_{\mathcal M}$ & Riemannian distance on $\mathcal M$. \\

$\rho(\mu)$ & Probability measure induced by the parameterized PDE at parameter value $\mu$. \\

$\Omega$ & Spatial domain containing the supports of all solution measures, 
$\Omega=[-\xi,\xi]^{D_{\bar\rho}}$. \\

$dx$ & Normalized Lebesgue measure on $\Omega$. \\

$\bar\rho$ & Reference probability distribution on $\Omega$. \\

\midrule
\multicolumn{2}{@{}l}{\emph{Transport potentials, maps, and signatures}} \\
\addlinespace[2pt]

$\phi(\mu)$ & Kantorovich potential for the optimal transport from $\bar\rho$ to $\rho(\mu)$;
$\int_\Omega \phi(\mu)(x)\,dx=0$. \\

$\mathbf{\Phi}(\mu,x)$ & Parameter-dependent Kantorovich potential, with
$\mathbf{\Phi}(\mu,\cdot)=\phi(\mu)$. \\

$\mathbf T(\mu,x)$ & Optimal transport map from $\bar\rho$ to $\rho(\mu)$, given by
$\mathbf T(\mu,\cdot)=I-\nabla_x\mathbf{\Phi}(\mu,\cdot)$. \\

$\mathcal L$ & Weighted Laplacian
$\mathcal L f=\nabla_x\cdot(\bar\rho\nabla_x f)$. \\

$\mathbf{\Psi}(\mu,x)$ & Transport-signature kernel,
$\mathbf{\Psi}=(I-\mathcal L)^{1/2}\mathbf{\Phi}$. \\

$H_\Psi$ & Integral operator with kernel $\mathbf{\Psi}$, $(H_\Psi f)(\mu)=\int_\Omega \mathbf{\Psi}(\mu,x)f(x)\,dx$.
\\
\midrule
\multicolumn{2}{@{}l}{\emph{Samples, grids, and skeleton basis}} \\
\addlinespace[2pt]

$n_{\rm skel}$ & Number of parameter samples used for skeleton selection in the analysis. \\

$\{\mu_\ell^{\rm skel}\}_{\ell=1}^{n_{\rm skel}}$ & Parameter samples used to select the skeleton basis. \\
$n$ & Number of parameter samples used to train the coefficient map. \\

$\{\mu_j^{\rm train}\}_{j=1}^{n}$ & Independent training samples for learning the map $\mu\mapsto a_m(\mu)$ in the analysis. \\

$m$ & Number of spatial grid points. \\

$\{x_k\}_{k=1}^m$ & Spatial discretization points in $\Omega$. \\

$r$ & Target rank of the reduced-order model. \\

$\Psi^{\rm skel}$ & Sampled signature matrix used for skeleton selection, $\Psi^{\rm skel}_{\ell,k}=\mathbf{\Psi}(\mu_\ell^{\rm skel},x_k)$. \\

$\Psi$ & Sampled signature matrix; when no sample splitting is used,
$\Psi_{j,k}=\mathbf{\Psi}(\mu_j,x_k)$. \\

$I$ & Selected row indices from the skeleton decomposition, $I=\{i_1,\ldots,i_r\}$. \\

$J$ & Selected column indices from the skeleton decomposition, $J=\{j_1,\ldots,j_r\}$. \\
$\nu_s$ & Selected parameter value,
$\nu_s=\mu_{i_s}^{\rm skel}$, for $s=1,\ldots,r$. \\

$\zeta_s$ & Selected spatial point,
$\zeta_s=x_{j_s}$, for $s=1,\ldots,r$. \\

$I_\mu$ & Selected parameter set,
$I_\mu=\{\nu_1,\ldots,\nu_r\}\subset\mathcal M$. \\

$J_x$ & Selected spatial point set,
$J_x=\{\zeta_1,\ldots,\zeta_r\}\subset\Omega$. \\

$\psi_s$ & Selected transport-signature basis function,
$\psi_s(\cdot)=\mathbf{\Psi}(\nu_s,\cdot)$. \\

$\psi(x)$ & Vector of selected basis functions,
$\psi(x)=(\psi_1(x),\ldots,\psi_r(x))^\top$. \\

\midrule
\multicolumn{2}{@{}l}{\emph{Projection coefficients and reduced approximations}} \\
\addlinespace[2pt]

$G$ & Continuous Gram matrix, $G=\int_\Omega \psi(x)\psi(x)^\top\,dx$. \\
$b(\mu)$ & Continuous right-hand side, $
b(\mu)=\int_\Omega \mathbf{\Psi}(\mu,x)\psi(x)\,dx$. \\

$a^*(\mu)$ & Continuous projection coefficient vector, $a^*(\mu)=G^{-1}b(\mu)$. \\

$\mathbf{\Psi}^{*r}$ & Continuous $L^2(\Omega,dx)$ projection of $\mathbf{\Psi}$ onto
$\operatorname{span}\{\psi_1,\ldots,\psi_r\}$. \\

$G_m$ & Grid-based Gram matrix, $
G_m=\frac1m\sum_{k=1}^m \psi(x_k)\psi(x_k)^\top$. \\

$b_m(\mu)$ & Grid-based right-hand side, $
b_m(\mu)=\frac1m\sum_{k=1}^m \mathbf{\Psi}(\mu,x_k)\psi(x_k)$. \\

$a_m(\mu)$ & Grid-based coefficient vector,
$a_m(\mu)=G_m^{-1}b_m(\mu)$. \\

$\mathbf{\Psi}_m^r$ & Grid-based rank-$r$ approximation, $
\mathbf{\Psi}_m^r(\mu,x)=a_m(\mu)^\top\psi(x)$. \\

$\widehat a_m$ & Learned approximation of the grid-based coefficient map $a_m$. \\

$\widehat{\mathbf{\Psi}}_{n,m}^r$ & Learned rank-$r$ approximation of $\mathbf{\Psi}$, $
\widehat{\mathbf{\Psi}}_{n,m}^r(\mu,x)
=
\widehat a_m(\mu)^\top\psi(x)$. \\

$\widehat{\mathbf{\Phi}}_{n,m}^r$ & Reconstructed transport potential, $\widehat{\mathbf{\Phi}}_{n,m}^r
=
(I-\mathcal L)^{-1/2}\widehat{\mathbf{\Psi}}_{n,m}^r$. \\

$\widehat{\mathbf T}_{n,m}^r$ & Approximated transport map, $
\widehat{\mathbf T}_{n,m}^r(\mu,x)
= x-\nabla_x\widehat{\mathbf{\Phi}}_{n,m}^r(\mu,x)$. \\

$\widehat\rho_{n,m}^r(\mu)$ & Reconstructed ROM distribution, $\widehat\rho_{n,m}^r(\mu)=\bigl(\widehat{\mathbf T}_{n,m}^r(\mu,\cdot)\bigr)_\#\bar\rho$. \\

\midrule
\multicolumn{2}{@{}l}{\emph{Quantities used in the error analysis}} \\
\addlinespace[2pt]

$h_\mu^{\rm skel}(n_{\rm skel})$ & Fill distance of the skeleton-selection parameter sample\\
$h_x(m)$ & Fill distance of the spatial grid, $
h_x(m)=\sup_{x\in\Omega}\min_{1\le k\le m}\|x-x_k\|_2$.\\

$\Delta_r^{\rm cont}$ & Maximal continuous $r\times r$ volume of the kernel $\mathbf{\Psi}$. \\

$\theta$ & Fraction of the maximal continuous volume captured by the selected skeleton.\\

$\gamma_r,\Gamma_r$ & Minimum and maximum eigenvalues of the continuous Gram matrix $G$. \\

$\sigma_k(\mathbf{\Psi})$ & $k$th singular value of the integral operator $H_\Psi$. \\

\bottomrule
% \end{tabularx}
\caption{Notation used in the OT-based reduced-order modeling framework.}
\label{tab:ot-rom-notation}
\end{longtable}

\section{Theoretical Analysis}
\label{sec:theory}
In this section, we present our main result, Theorem~\ref{mainthm}, which establishes a mean-squared $W_2$ error bound between the reconstructed distribution $\widehat{\rho}_{n,m}^r(\mu)$ and the true distribution $\rho(\mu)$. We begin by stating a few necessary assumptions.

\subsection{Assumptions}
We first specify the sample-splitting convention used in the analysis. Let \[ \{\mu_\ell^{\rm skel}\}_{\ell=1}^{n_{\rm skel}} \subset \mathcal M \] be the parameter sample used for skeleton selection, and let $\{x_k\}_{k=1}^m\subset\Omega$ be the spatial grid. These samples define the skeleton-selection matrix \[ \Psi^{\rm skel}\in\mathbb R^{n_{\rm skel}\times m}, \qquad \Psi^{\rm skel}_{\ell,k} := \mathbf{\Psi}(\mu_\ell^{\rm skel},x_k). \] A maximum-volume skeleton decomposition selects row indices 
\[ 
I=\{i_1,\ldots,i_r\}\subset\{1,\ldots,n_{\rm skel}\} \] 
and column indices 
\[ J=\{j_1,\ldots,j_r\}\subset\{1,\ldots,m\}. 
\] 
We form the reduced basis in \eqref{eq:psi} by selected parameter and spatial points \[ \nu_s:=\mu_{i_s}^{\rm skel}, \qquad \zeta_s:=x_{j_s}, \qquad s=1,\ldots,r. \]
% We denote the selected parameter and spatial points by \[ \nu_s:=\mu_{i_s}^{\rm skel}, \qquad \zeta_s:=x_{j_s}, \qquad s=1,\ldots,r. \] The reduced basis is formed from the selected rows, 
%\stnote{I feel there is a little repetition of the part near Eq (22) and here, seems to redefine the same things}
% \[ \psi_s(\cdot):=\mathbf{\Psi}(\nu_s,\cdot), \qquad s=1,\ldots,r, \] and we write \[ \psi(x)=(\psi_1(x),\ldots,\psi_r(x))^\top. \]

Conditional on this selected basis and spatial grid, let \[ \{\mu_j^{\rm train}\}_{j=1}^{n} \] be an independent parameter sample drawn from $Q$. This second sample is used only to train the parameter-to-coefficient map. The continuous projection coefficients are denoted by $a^*(\mu)$, while the grid-based coefficients are denoted by $a_m(\mu)$. Accordingly, $\mathbf{\Psi}^{*r}$ denotes the continuous $L^2(\Omega,dx)$ projection, $\mathbf{\Psi}_m^r$ denotes its grid-based counterpart, and $\widehat{\mathbf{\Psi}}_{n,m}^r$ denotes the learned reduced-order model. The dependence on the skeleton-selection sample is suppressed in the notation, since the statistical estimates are conditional on the selected basis and grid.

We now state the assumptions used to prove the main error bound. The first assumption is geometric and ensures that the parameter space can be treated as a well-behaved compact manifold for sampling and learning estimates.

\begin{assumption} \label{assump1}
We assume that $\mathcal M$ has positive reach
$\tau_{\mathcal M}>0$, where
\[
\tau_{\mathcal M}
:=
\sup\{r>0:\text{ every point within distance }r\text{ of }\mathcal M
\text{ has a unique nearest point on }\mathcal M\}.
\]
\end{assumption}

Our second assumption concerns the regularity of $\mathbf{\Psi}$.

\begin{assumption}[Regularity of $\mathbf{\Psi}$]\label{assump2}
Suppose $\mathbf{\Psi}:\mathcal{M}\times\Omega\to\mathbb{R}$ satisfies the
following conditions.
\begin{itemize}
\item $\mathbf{\Psi}$ is nondegenerate at rank $r$, in the sense that
\begin{equation} \label{Delta_cont}
\Delta_r^{\rm cont}
:=
\max_{\substack{(\mu_1,\ldots,\mu_r)\in\mathcal M^r\\
(x_1,\ldots,x_r)\in\Omega^r}}
\left|
\det\big[\mathbf{\Psi}(\mu_i,x_j)\big]_{i,j=1}^r
\right|
>0.
\end{equation}

\item $\mathbf{\Psi}$ is Lipschitz continuous with constant
$L_{\mathbf{\Psi}}>0$:
\[
|\mathbf{\Psi}(\mu',x')-\mathbf{\Psi}(\mu,x)|
\le
L_{\mathbf{\Psi}}
\sqrt{d_{\mathcal{M}}(\mu,\mu')^2+\|x'-x\|_2^2},
\]
where $d_{\mathcal{M}}$ is the Riemannian distance on $\mathcal{M}$.

\item $\mathbf{\Psi}$ is uniformly bounded:
\[
\sup_{(\mu,x)\in\mathcal M\times\Omega}
|\mathbf{\Psi}(\mu,x)|\le U
\]
for some constant $U>0$.
\end{itemize}
\end{assumption}

% To state our main result, let us define a few more quantities. Let $\triangle=\{\triangle_{i,j}\}_{i,j\geq 1}$ be a discretization of the domain of $\mathbf{T}_k$, and let $\mathbf{T}_{k,s}$ be the piecewise constant map obtained from $\mathbf{T}_i$ with the discretization $\triangle$, i.e.,

% \begin{equation} \label{eq:Ts1D}
%     \mathbf{T}_{k,s} := \mathbf{T}_k(\mu_i,x_j) \quad \text{for some} \quad (\mu_i,x_j)\in \triangle_{i,j}.
% \end{equation}

% Note that the map $\mathbf{T}_{k,s}$ is of some finite rank $p$, and is Lipschitz continuous (with Lipschitz constant depending on the size of the discretization) since $\mathbf{T}$ is Lipschitz. For any fixed $r\leq p$, let $\mathbf{T}^r_{k,s}(\cdot,\cdot)$ be the rank $r$ approximation of $\mathbf{T}(\cdot,\cdot)$ which minimizes the Chebyshev residual between $\mathbf{T}_{k,s}^r(\cdot,\cdot)$ and $\mathbf{T}_{k,s}(\cdot,\cdot)$. By Schmidt--Hammerstein theorem \citep{?}, for any fixed $\mu$, there exists coefficients $a_{k,i}(\mu)$ and continuous basis $\{\phi_{k,i}(\cdot)\}_{i=1}^r:[-\xi,\xi]^{D_{\bar{\rho}}}\to [-\xi,\xi]$ \ZS{check the continuous condition in Alex Townsend's paper} such that 
% \begin{equation} \label{eq:Tsr1D}
% \mathbf{T}_{k,s}^r(\mu,\cdot):=\sum_{i=1}^r a_{k,i}(\mu)\phi_{k,i}(\cdot)    
% \end{equation}
% which minimizes the low rank Chebyshev residual $\|\mathbf{T}_{k,s}(\cdot,\cdot)-\mathbf{T}^r_{k,s}(\cdot,\cdot)\|_{L^\infty(Q)\times L^{\infty}(\bar{\rho})}$. The low rank Chebyshev residual can be bound by Lemmas \ref{lemmaskeleton} and \ref{lemmaskeletonConti}.

The next assumption concerns the reduced basis and the spatial quadrature rule. It ensures that the selected signatures are linearly independent in $L^2(\Omega,dx)$ and that the grid-based inner products used to compute $a_m(\mu)$ are accurate approximations of the corresponding continuous inner products.
\begin{assumption}[Reduced basis and quadrature]\label{assump3}
Let
\[
\psi(x):=(\psi_1(x),\ldots,\psi_r(x))^\top,
\qquad
G:=\int_\Omega \psi(x)\psi(x)^\top\,dx,
\qquad
G_m:=\frac1m\sum_{k=1}^m \psi(x_k)\psi(x_k)^\top.
\]
Assume that:
\begin{enumerate}
    \item[(i)] The Gram matrix $G$ is positive definite, with
    \[
    \gamma_r:=\lambda_{\min}(G)>0,
    \qquad
    \Gamma_r:=\lambda_{\max}(G)<\infty.
    \]

    \item[(ii)] The quadrature rule satisfies
    \[
    \left|
    \int_\Omega f(x)\,dx-\frac1m\sum_{k=1}^m f(x_k)
    \right|
    \le C_{\rm quad}\,\mathrm{Lip}(f)\,h_x(m)
    \]
    for every Lipschitz function $f$.

    \item[(iii)] The fill distance is sufficiently small:
    \[
    h_x(m)\le
    \frac{\gamma_r}{4C_{\rm quad}rUL_{\mathbf\Psi}},
    \]
    where $U$ and $L_{\mathbf\Psi}$ are defined in
    Assumption~\ref{assump2}.
\end{enumerate}
\end{assumption}

% Our last assumption is for bounding the low-rank approximation error on the continuous level.
% \begin{assumption}[Low-rank approximation]\label{assump4}
% Let $\mathbf{\Psi}:\mathcal M\times \Omega\to \mathbb R$ be a kernel. Assume that $I=\{\mu_{i_1},\ldots,\mu_{i_r}\}\subset\mathcal M$ and
% $J=\{x_{j_1},\ldots,x_{j_r}\}\subset\Omega$ satisfy
% \[
%     \left|\det \mathbf{\Psi}(I,J)\right|
%     \ge
%     \nu\,\Delta_r^{\rm cont}
% \]
% for some $\nu\in(0,1]$, with $\Delta_r^{\rm cont}>0$ given in~\eqref{Delta_cont}. Define
% \begin{equation}\label{eq:psi_ske}
%     \mathbf{\Psi}_r^{\rm skel}(\mu,x)
%     :=
%     \mathbf{\Psi}(\mu,J)
%     [\mathbf{\Psi}(I,J)]^{-1}
%     \mathbf{\Psi}(I,x).
% \end{equation}
% Assume that:
% \[
%     \left\|
%     \mathbf{\Psi}-\mathbf{\Psi}_r^{\rm skel}
%     \right\|_{L^\infty(\mathcal M\times\Omega)}
%     \leq \frac{C\sqrt{r+1}}{\nu} \sigma_{r+1}(\mathbf{\Psi}),
% \]
% where $C$ is a constant that is independent of $\nu$, $r$ or the concrete $\mathbf{\Psi}$, but is dependent on the regularity class for which $\mathbf{\Psi}$ belongs to. Here, $\sigma_{r+1}(\mathbf{\Psi})$ is the $(r+1)$-th singular value of the kernel $\Psi$.
% \end{assumption}

The final assumption isolates the low-rank approximation property supplied by the skeleton decomposition. It states that if the selected rows and columns capture a fixed fraction of the maximal continuous volume, then the associated skeleton approximation has an error controlled by the next singular value of the transport-signature kernel.

The column set $J_x$ is used only in the analysis of the skeleton approximation; the reduced-order basis itself is formed from the selected rows
$\{\psi_s\}_{s=1}^r$.

\begin{assumption}[Low-rank approximation]\label{assump4} 
Consider the kernel function $\mathbf{\Psi}:\mathcal M\times \Omega\to \mathbb R$. Let
\[
    I_\mu=\{\nu_1,\ldots,\nu_r\}\subset\mathcal M,
    \qquad
    J_x=\{\zeta_1,\ldots,\zeta_r\}\subset\Omega
\]
be selected parameter and spatial point sets. Suppose that
\[
    \left|\det \mathbf{\Psi}(I_\mu,J_x)\right|
    \ge
    \theta\,\Delta_r^{\rm cont}\,,\quad \text{for some $\theta\in(0,1]$}\,,
\]
where $\Delta_r^{\rm cont}$ is defined in~\eqref{Delta_cont}. Define the continuous skeleton approximation
\begin{equation}\label{eq:psi_ske}
    \mathbf{\Psi}_r^{\rm skel}(\mu,x)
    :=
    \mathbf{\Psi}(\mu,J_x)
    [\mathbf{\Psi}(I_\mu,J_x)]^{-1}
    \mathbf{\Psi}(I_\mu,x).
\end{equation}
Assume that
\[
    \left\|
    \mathbf{\Psi}-\mathbf{\Psi}_r^{\rm skel}
    \right\|_{L^\infty(\mathcal M\times\Omega)}
    \leq
    \frac{C (r+1)}{\theta}\,
    \sigma_{r+1}(\mathbf{\Psi}),
\]
where $\sigma_{r+1}(\mathbf{\Psi})$ denotes the $(r+1)$-th singular value of the operator $H_\Psi$ in~\eqref{eq:H}. The constant $C$ is independent of $\theta$, $r$, and the particular kernel $\mathbf{\Psi}$, but may depend on the regularity class under consideration.
\end{assumption}

\begin{remark}[Motivation for Assumption~\ref{assump4}]
Assumption~\ref{assump4} is the continuous-kernel analogue of the classical max-volume skeleton approximation bound for finite matrices. Indeed, it was established in \cite{ALS} that, if $A\in\mathbb R^{m\times n}$ and $A_{I,J}$ has volume at least a fraction $\theta\in(0,1]$ of the maximum $r\times r$ volume, then the skeleton approximation
\[
    A_{:,J}A_{I,J}^{-1}A_{I,:}
\]
satisfies a Chebyshev-norm estimate of the form
\[
    \left\|A-A_{:,J}A_{I,J}^{-1}A_{I,:}\right\|_C
    \le
    \frac{r+1}{\theta\sqrt{\sum_{k=1}^{r+1}\sigma_k^{-2}(A)}}
    \le
    \frac{r+1}{\theta}\sigma_{r+1}(A).
\]
Assumption~\ref{assump4} postulates the corresponding bound for the continuous matrix $\mathbf{\Psi}(\mu,x)$, with the matrix maximum norm replaced by $L^\infty(\mathcal M\times\Omega)$ and the matrix singular values replaced by the singular values of the integral operator $H_\Psi$ in~\eqref{eq:H}.
\end{remark}

\subsection{Main Result}

Before stating the main theorem, we record two auxiliary facts. The first transfers error estimates for transport maps into $W_2$ error estimates for the corresponding pushforward measures. The second relates the maximal volume of a sampled matrix to the maximal volume of the underlying continuous kernel; this justifies using a discrete skeleton decomposition to select representative transport signatures.

\begin{lemma} \label{lemma:OT_distance}
Let $\lambda$ be a probability measure on a compact domain
$\Omega\subseteq\mathbb{R}^{D_{\bar\rho}}$. Let
$\rho_1:=(T_1)_{\#}\lambda$ and
$\rho_2:=(T_2)_{\#}\lambda$, where
$T_1,T_2\in L^2(\lambda)$. Then
\begin{equation}
    W^2_2(\rho_1,\rho_2)
    \leq
    \|T_1-T_2\|_{L^2(\lambda)}^2.
\end{equation}
\end{lemma}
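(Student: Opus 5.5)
The plan is to exhibit an explicit coupling between $\rho_1$ and $\rho_2$ built from the common source measure $\lambda$. Its transport cost will equal $\|T_1-T_2\|_{L^2(\lambda)}^2$, and since $W_2^2$ is an infimum over couplings, the inequality follows. The natural candidate is
\[
    \pi := (T_1,T_2)_{\#}\lambda,
\]
the pushforward of $\lambda$ under $x\mapsto (T_1(x),T_2(x))$, a measurable map from $\Omega$ into $\mathbb{R}^{D_{\bar\rho}}\times\mathbb{R}^{D_{\bar\rho}}$.

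First, I will check that $\pi\in\Pi(\rho_1,\rho_2)$. For a measurable set $B$,
\[
    \pi(B\times\mathbb{R}^{D_{\bar\rho}}) = \lambda\bigl(T_1^{-1}(B)\bigr) = \rho_1(B),
\]
and similarly the second marginal is $\rho_2$. Both $\rho_i$ lie in $\mathbb{P}_2$ because $T_i\in L^2(\lambda)$ gives $\int\|y\|_2^2\,d\rho_i=\|T_i\|_{L^2(\lambda)}^2<\infty$. So $W_2(\rho_1,\rho_2)$ is well defined, with the infimum taken over couplings on the ambient space containing the images of the $T_i$. If one insists that $W_2$ be taken on $\Omega$, one assumes as in the paper that the $T_i$ map into $\Omega$.

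Next, I will use the change-of-variables formula for pushforwards, applied to the nonnegative measurable function $(y,z)\mapsto\|y-z\|_2^2$:
\[
    \int \|y-z\|_2^2\,d\pi(y,z) = \int_\Omega \|T_1(x)-T_2(x)\|_2^2\,d\lambda(x) = \|T_1-T_2\|_{L^2(\lambda)}^2.
\]
Taking the infimum over $\Pi(\rho_1,\rho_2)$ in the definition of $W_2^2$ then gives the claim.

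There is no real obstacle here. The only point requiring care is bookkeeping: the $p$-norm notation in the paper's definition of $W_p$ should be read as the Euclidean norm when $p=2$. Compactness of $\Omega$ is not actually needed beyond guaranteeing finite second moments.
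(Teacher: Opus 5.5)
Your proposal is correct and matches the paper's proof: you use the same coupling $(T_1,T_2)_{\#}\lambda$, verify its marginals, compute its cost by change of variables, and bound $W_2^2$ by the infimum over couplings. Your remarks on finite second moments and on the ambient space for the $T_i$ are harmless refinements that the paper leaves implicit.
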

\begin{proof}
    Let us define a coupling map $\pi$ between $\rho_1$ and $\rho_2$ by pushing $\lambda$ forward through the map $(T_1,T_2)$, i.e., $\pi:=(T_1,T_2)_{\#}\lambda$. Then $\pi$ has the first marginal $(T_1)_{\#}\lambda=\rho_1$ and second marginal $(T_2)_{\#}\lambda=\rho_2$, so $\pi\in\Pi(\rho_1,\rho_2)$.
    By the definition of 2-Wasserstein distance,
    \begin{equation*}
        W^2_2(\rho_1,\rho_2)=\inf_{\gamma\in\Pi(\rho_1,\rho_2)}\int|y-z|^2 d\gamma(y,z)\leq \int|y-z|^2d\pi(y,z).
    \end{equation*}
    Now compute the cost under $\pi$ using the pushforward definition,
    \begin{equation*}
        \int|y-z|^2d\pi(y,z)=\int_{\Omega}|T_1(x)-T_2(x)|^2 d\lambda(x)=\|T_1-T_2\|_{L^2(\lambda)}^2.
    \end{equation*}
    Combining the two inequalities gives the desired result.
\end{proof}

\begin{lemma}\label{lem:nu_bound_mult}
Let Assumption~\ref{assump2} hold and set $\Omega=[-\xi,\xi]^{D_{\bar\rho}}$.
Let $\{\mu_j\}_{j=1}^n\subset\mathcal M$ and $\{x_k\}_{k=1}^m\subset\Omega$ be sampling sets and
define the sampled matrix $\Psi\in\mathbb R^{n\times m}$ by~\eqref{eq:data_matrix}. Define $\varepsilon_{n,m}:=L_{\mathbf{\Psi}}\big(h_\mu(n)+h_x(m)\big)$.

For $r\ge1$, define the continuous and discrete maximal volumes 
%\stnote{Notation for $\Psi(I,J)$, previously it is defined as $\Psi_{I:J}$? }\jy{I don't see $\Psi_{I:J}$}
\[
\Delta_r^{\mathrm{cont}}
:=
\sup_{\substack{I_\mu\subset\mathcal{M},\,J_x\subset\Omega\\ |I_\mu|=|J_x|=r}}
|\det\mathbf{\Psi}(I_\mu,J_x)|,
\qquad
\Delta_r^{\mathrm{disc}}
:=
\max_{\substack{I\subset\{1,\ldots,n\},\,J\subset\{1,\ldots,m\}\\ |I|=|J|=r}}
|\det\Psi(I,J)|.
\]
Let $C_r:=r^{\frac r2+1}$. Then
\[
\Delta_r^{\mathrm{disc}}
\;\ge\;
\Delta_r^{\mathrm{cont}}-C_r\,U^{r-1}\,\varepsilon_{n,m}.
\]
Consequently, defining
\[
\theta_{n,m}
:=
\max\!\left\{0,\,
1-\frac{C_r\,U^{r-1}\,L_{\mathbf{\Psi}}\big(h_\mu(n)+h_x(m)\big)}{\Delta_r^{\mathrm{cont}}}
\right\}\in[0,1],
\]
we obtain
\[
\Delta_r^{\mathrm{disc}}
\ge
\theta_{n,m}\,\Delta_r^{\mathrm{cont}}.
\]
In particular, if
\[
h_\mu(n)+h_x(m)
<
\frac{\Delta_r^{\mathrm{cont}}}
{C_r\,U^{r-1}\,L_{\mathbf{\Psi}}},
\]
then $\theta_{n,m}\in(0,1]$.
\end{lemma}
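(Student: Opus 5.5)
Since $\mathcal M$ and $\Omega$ are compact and $\mathbf\Psi$ is continuous (indeed Lipschitz by Assumption~\ref{assump2}), the supremum defining $\Delta_r^{\rm cont}$ is attained. Fix a maximizer $I_\mu^\star=\{\mu_1^\star,\ldots,\mu_r^\star\}$, $J_x^\star=\{x_1^\star,\ldots,x_r^\star\}$. By the definition of the fill distances, for each $i$ there is a sample index $j_i$ with $d_{\mathcal M}(\mu_i^\star,\mu_{j_i})\le h_\mu(n)$. Likewise, for each $l$ there is a grid index $k_l$ with $\|x_l^\star-x_{k_l}\|_2\le h_x(m)$. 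Let $A^\star=[\mathbf\Psi(\mu_i^\star,x_l^\star)]_{i,l}$ and $A=[\mathbf\Psi(\mu_{j_i},x_{k_l})]_{i,l}$.

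If the indices $j_i$ (or $k_l$) are not distinct, then $A$ has two equal rows (or columns) and $\det A=0$. The plan treats this case through the perturbation bound below, which still gives $|\det A^\star|\le C_rU^{r-1}\varepsilon_{n,m}$, so the claimed inequality holds trivially since $\Delta_r^{\rm disc}\ge0$. Otherwise $A$ is an $r\times r$ submatrix of $\Psi$, and $\Delta_r^{\rm disc}\ge|\det A|$. It therefore suffices to show
\[
\big|\det A^\star-\det A\big|\le C_r U^{r-1}\varepsilon_{n,m}.
\]

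The entrywise perturbation follows from the Lipschitz assumption:
\[
|A^\star_{il}-A_{il}|\le L_{\mathbf\Psi}\sqrt{h_\mu^2+h_x^2}\le L_{\mathbf\Psi}(h_\mu+h_x)=\varepsilon_{n,m}.
\]
Every entry of $A^\star$ and of $A$ is bounded by $U$. For the determinant difference, I would telescope column by column: replace the columns of $A^\star$ by those of $A$ one at a time. By multilinearity, each step changes the determinant by the determinant of a matrix that has one column equal to the difference $E_{:,l}$ and $r-1$ columns with entries bounded by $U$. Hadamard's inequality bounds each such determinant by
\[
\|E_{:,l}\|_2\prod(\text{other column norms})\le(\sqrt r\,\varepsilon_{n,m})(\sqrt r\,U)^{r-1}.
\]
Summing the $r$ steps gives $r\cdot r^{r/2}U^{r-1}\varepsilon_{n,m}=r^{\frac r2+1}U^{r-1}\varepsilon_{n,m}=C_rU^{r-1}\varepsilon_{n,m}$, which is exactly the stated constant. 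This confirms the intended route.

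The consequences are then algebraic. From $\Delta_r^{\rm disc}\ge\Delta_r^{\rm cont}-C_rU^{r-1}\varepsilon_{n,m}$, dividing by $\Delta_r^{\rm cont}>0$ and combining with $\Delta_r^{\rm disc}\ge0$ gives $\Delta_r^{\rm disc}\ge\theta_{n,m}\Delta_r^{\rm cont}$. The value $\theta_{n,m}\le1$ holds automatically. Under the stated smallness of $h_\mu+h_x$, the subtracted fraction is strictly less than $1$, so $\theta_{n,m}>0$.

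The main care points are two. The first is that the maximum is attained; if it were not, I would take near-maximizers and let the slack go to zero. The second is the bookkeeping in the Hadamard/telescoping step that produces precisely $r^{r/2+1}$. No step appears to be a real obstacle.
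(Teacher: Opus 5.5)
Your proposal is correct and follows essentially the same route as the paper: attain the continuous maximizer, snap each point to a nearest sample via the fill distances, bound the entries by $\varepsilon_{n,m}$ using the Lipschitz property, and telescope column by column with Hadamard's inequality to obtain exactly $C_r=r^{r/2+1}$. Your treatment of coinciding nearest-sample indices is cleaner than the paper's: there $\det A=0$, so the perturbation bound itself forces $\Delta_r^{\mathrm{cont}}\le C_rU^{r-1}\varepsilon_{n,m}$ and the inequality holds trivially. The paper only asserts that the representatives are distinct for sufficiently small fill distances.
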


\begin{proof}
By Assumption~\ref{assump2}, $\mathbf{\Psi}$ is Lipschitz and therefore continuous on the compact set
$\mathcal M\times\Omega$. Hence the map
\[
(\mu_1,\ldots,\mu_r,x_1,\ldots,x_r)
\mapsto
\det\big(\mathbf{\Psi}(\mu_i,x_j)\big)_{i,j=1}^r
\]
is continuous on the compact set $(\mathcal M^r)\times(\Omega^r)$, and therefore the supremum
defining $\Delta_r^{\mathrm{cont}}$ is attained. Thus there exist
$I^*=\{\mu_1^*,\ldots,\mu_r^*\}\subset\mathcal M$ and
$J^*=\{x_1^*,\ldots,x_r^*\}\subset\Omega$
such that
\[
\Delta_r^{\mathrm{cont}}
=
\big|\det A\big|,
\qquad
A:=\big(\mathbf{\Psi}(\mu_i^*,x_j^*)\big)_{i,j=1}^r .
\]
For sufficiently small fill distances, these sampled representatives may be chosen to be distinct, since the maximizing tuples have nonzero determinant and hence consist of distinct parameter and spatial points.

By definition of the fill distances $h_\mu(n)$ and $h_x(m)$, for each $\mu_i^*$ there exists
$\widehat i_i\in\{1,\ldots,n\}$ such that
\[
d_{\mathcal M}(\mu_i^*,\mu_{\widehat i_i})\le h_\mu(n),
\]
and for each $x_j^*$ there exists $\widehat j_j\in\{1,\ldots,m\}$ such that
\[
\|x_j^*-x_{\widehat j_j}\|_2\le h_x(m).
\]
Let $\widehat I=\{\widehat i_1,\ldots,\widehat i_r\}$ and $\widehat J=\{\widehat j_1,\ldots,\widehat j_r\}$, and define 
%\stnote{Notation for $\Psi(\widehat I,\widehat J)$, previously it is defined as $\Psi_{\widehat I:\widehat J}$? }
\[
\widehat A
:=
\Psi(\widehat I,\widehat J)
=
\big(\mathbf{\Psi}(\mu_{\widehat i_i},x_{\widehat j_j})\big)_{i,j=1}^r .
\]

By the Lipschitz continuity of $\mathbf{\Psi}$,
\[
|\widehat A_{ij}-A_{ij}|
=
|\mathbf{\Psi}(\mu_{\widehat i_i},x_{\widehat j_j})-\mathbf{\Psi}(\mu_i^*,x_j^*)|
\le
L_{\mathbf{\Psi}}(h_\mu(n)+h_x(m))
=
\varepsilon_{n,m}.
\]
Hence $\widehat A=A+E$ with $|E_{ij}|\le\varepsilon_{n,m}$.

Let $a_1,\ldots,a_r$ denote the columns of $A$ and $e_1,\ldots,e_r$ the columns of $E$.
Using a telescoping expansion in the columns and Hadamard's inequality,
\[
|\det(A+E)-\det(A)|
\le
\sum_{\ell=1}^r
\left|
\det(\widehat a_1,\ldots,\widehat a_{\ell-1},
e_\ell,a_{\ell+1},\ldots,a_r)
\right|,
\]
where $a_\ell$ and $\widehat a_\ell$ are the $\ell$th columns of $A$ and
$\widehat A=A+E$, respectively, and $e_\ell=\widehat a_\ell-a_\ell$.
Since both $A$ and $\widehat A$ have entries bounded by $U$, their columns
have Euclidean norm at most $\sqrt r\,U$, while
$\|e_\ell\|_2\le \sqrt r\,\varepsilon_{n,m}$. Hence
\[
|\det(A+E)-\det(A)|
\le
r(\sqrt r\,\varepsilon_{n,m})(\sqrt r\,U)^{r-1}
=
C_rU^{r-1}\varepsilon_{n,m},
\qquad
C_r=r^{\frac r2+1}.
\]

Consequently,
\[
|\det \widehat A|
\ge
|\det A|-|\det(\widehat A)-\det(A)|
\ge
\Delta_r^{\mathrm{cont}}-C_r\,U^{r-1}\varepsilon_{n,m}.
\]
Since $\Delta_r^{\mathrm{disc}}$ is the maximal determinant over all sampled
$r\times r$ submatrices, it follows that
\[
\Delta_r^{\mathrm{disc}}
\ge
|\det\Psi(\widehat I,\widehat J)|
=
|\det\widehat A|
\ge
\Delta_r^{\mathrm{cont}}-C_r\,U^{r-1}\varepsilon_{n,m}.
\]

Dividing by $\Delta_r^{\mathrm{cont}}$ and defining
\[
\theta_{n,m}
=
\max\!\left\{
0,\,
1-\frac{C_r\,U^{r-1}L_{\mathbf{\Psi}}(h_\mu(n)+h_x(m))}
{\Delta_r^{\mathrm{cont}}}
\right\},
\]
yields
\[
\Delta_r^{\mathrm{disc}}
\ge
\theta_{n,m} \Delta_r^{\mathrm{cont}}.
\]
\end{proof}

We now state the main result, which establishes a mean-squared $W_2$ generalization error bound between the reconstructed distribution $\widehat{\rho}_{n,m}^r(\mu)$ and the true distribution $\rho(\mu)$.
\begin{theorem}\label{mainthm}
Suppose Assumptions~\ref{assump1}--\ref{assump4} hold under the
sample-splitting convention described above. Let
\[
    I_\mu=\{\nu_1,\ldots,\nu_r\}\subset\mathcal M,
    \qquad
    J_x=\{\zeta_1,\ldots,\zeta_r\}\subset\Omega
\]
be the selected parameter and spatial point sets, and suppose that
\begin{equation}\label{eq:volume_condition_thm}
|\det\mathbf{\Psi}(I_\mu,J_x)|
\ge
\theta\,\Delta_r^{\rm cont}
\end{equation}
for some $\theta\in(0,1]$. Conditional on this selected basis and grid, let $\widehat a_m$ be the empirical risk minimizer in~\eqref{eq:hata} over the network class $\mathcal N_r(R,\kappa,L,p,K)$ defined in~\eqref{NN}. Choose $R\ge R_a$ where $R_a:=\frac{2\sqrt r\,U^2}{\gamma_r}$, and choose the remaining network parameters as
\begin{align*}
L =\tilde{O}\left(\frac{\log n}{2+d}\right), \quad 
p=\tilde{O}\left(rn^{\frac{d}{2+d}}\right),\quad K=\tilde{O}\left(\frac{r}{2+d}n^{\frac{d}{2+d}}\log n\right),
\quad
\kappa=O(\max\{1,b,\sqrt{d},\tau^2_{\mathcal{M}}\}).
\end{align*}
Let $\sigma_k(\mathbf{\Psi})$ denote the $k$th singular value of the integral operator $H_\Psi$ in~\eqref{eq:H}. Then
\begin{align} 
\nonumber
&\mathbb{E}\left[\int_{\mathcal{M}}
W^2_2\left(\rho(\mu), \widehat{\rho}^r_{n,m}(\mu)\right)dQ(\mu)\right] \\
&\leq
\frac{4C^2(r+1)^2\sigma^2_{r+1}(\mathbf{\Psi})}{\theta^2}  
+ 4C_1\,\Gamma_r\,h_x(m)^2 
+2c\Gamma_r \frac{r^2\,U^4}{\gamma_r^2}
\left(n^{-\frac{2}{2+d}}+\frac Dn\right)\log^3n.
\label{eq:totalerror}
\end{align}
Here $C_1$ is the constant that depends only on
$r,\gamma_r,U,L_{\mathbf{\Psi}},C_{\rm quad}$, and $c$
is a constant depending on $\log D$, $d$, $b$, $\kappa$,
$\mathcal M$, and the Lipschitz radius of the coefficient functions. The notation $\tilde{O}(\cdot)$ hides factors depending on $d$ and some logarithmic
factors.

Moreover, if the skeleton-selection parameter sample and spatial grid have fill distances
\[
    h_\mu^{\rm skel}(n_{\rm skel})
    :=
    \sup_{\mu\in\mathcal M}
    \min_{1\le \ell\le n_{\rm skel}}
    d_{\mathcal M}(\mu,\mu_\ell^{\rm skel})
\]
and $h_x(m)$, then the volume condition~\eqref{eq:volume_condition_thm} holds with
\[
\theta
=
1-
\frac{C_{\rm det}(r)U^{r-1}L_{\mathbf\Psi}
\big(h_\mu^{\rm skel}(n_{\rm skel})+h_x(m)\big)}
{\Delta_r^{\rm cont}},
\]
provided the right-hand side is positive.
\end{theorem}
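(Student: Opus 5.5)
We reduce the Wasserstein error to an $L^2$ error in signature space and then split that error into three pieces.

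\emph{Step 1: reduction to signature space.} Fix $\mu$. Both $\rho(\mu)$ and $\widehat\rho^r_{n,m}(\mu)$ are pushforwards of $\bar\rho$, by $\mathbf T(\mu,\cdot)$ and by $\widehat{\mathbf T}^r_{n,m}(\mu,\cdot)$. Lemma~\ref{lemma:OT_distance} therefore gives
\[
W_2^2\bigl(\rho(\mu),\widehat\rho^r_{n,m}(\mu)\bigr)\le \|\nabla_x(\mathbf\Phi-\widehat{\mathbf\Phi}^r_{n,m})(\mu,\cdot)\|^2_{L^2(\bar\rho)}.
\]
Apply \eqref{eq:H1seminormbd} to $f=(I-\mathcal L)^{-1/2}(\mathbf\Psi-\widehat{\mathbf\Psi}^r_{n,m})(\mu,\cdot)$. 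This bounds the right-hand side by $\|(\mathbf\Psi-\widehat{\mathbf\Psi}^r_{n,m})(\mu,\cdot)\|^2_{L^2(dx)}$.

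\emph{Step 2: three-way split.} Write
\[
\mathbf\Psi-\widehat{\mathbf\Psi}^r_{n,m}=(\mathbf\Psi-\mathbf\Psi^{*r})+(\mathbf\Psi^{*r}-\mathbf\Psi^r_m)+(\mathbf\Psi^r_m-\widehat{\mathbf\Psi}^r_{n,m}).
\]
Use $(x+y+z)^2\le 4x^2+4y^2+2z^2$, which follows from $(x+y+z)^2\le 2(x+y)^2+2z^2$. Each term is handled as follows.

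\begin{itemize}
\item \emph{Low-rank term.} Since $\mathbf\Psi^{*r}(\mu,\cdot)$ is the $L^2(dx)$-orthogonal projection onto $\operatorname{span}\{\psi_s\}$, and $\mathbf\Psi^{\rm skel}_r(\mu,\cdot)$ in \eqref{eq:psi_ske} lies in that span, its error is at most $\|\mathbf\Psi-\mathbf\Psi_r^{\rm skel}\|_{L^\infty}$. The measure $dx$ is normalized, so Assumption~\ref{assump4} gives $C(r+1)\sigma_{r+1}/\theta$. Squaring and multiplying by 4 gives the first term.

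\item \emph{Discretization term.} Since $\|a^\top\psi\|^2_{L^2(dx)}=a^\top Ga\le\Gamma_r|a|^2$, it suffices to bound $|a^*(\mu)-a_m(\mu)|$. Entries of $\psi\psi^\top$ and of $\mathbf\Psi\psi$ are Lipschitz with constant $2UL_{\mathbf\Psi}$ and bounded by $U^2$. Assumption~\ref{assump3}(ii) then gives $\|G-G_m\|\le 2rC_{\rm quad}UL_{\mathbf\Psi}h_x$ and $|b-b_m|\le 2\sqrt r\,C_{\rm quad}UL_{\mathbf\Psi}h_x$. By (iii), $\|G-G_m\|\le\gamma_r/2$, so $\lambda_{\min}(G_m)\ge\gamma_r/2$. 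The identity
\[
G^{-1}b-G_m^{-1}b_m=G^{-1}(b-b_m)+G^{-1}(G_m-G)G_m^{-1}b_m
\]
then yields $|a^*-a_m|\le\sqrt{C_1}\,h_x$. Squaring and multiplying by $4\Gamma_r$ gives the second term. The same argument shows $|a_m(\mu)|\le\|G_m^{-1}\||b_m|\le 2\sqrt r\,U^2/\gamma_r=R_a$, which justifies $R\ge R_a$.

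\item \emph{Learning term.} Again $\|(\mathbf\Psi^r_m-\widehat{\mathbf\Psi}^r_{n,m})(\mu,\cdot)\|^2\le\Gamma_r|a_m(\mu)-\widehat a_m(\mu)|^2$. Conditional on the skeleton sample and the grid, this is a nonparametric regression of the Lipschitz, bounded target $a_m$ on the $d$-dimensional manifold $\mathcal M$. The data are noiseless, with i.i.d.\ inputs from $Q$.
\end{itemize}

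\emph{Step 3: the learning bound (main obstacle).} I would invoke a manifold neural-network regression result in the style of Chen--Jiang--Liao--Zhao (ReLU networks on manifolds of positive reach, Assumption~\ref{assump1}), applied componentwise or with vector output. With the stated $L,p,K,\kappa$, it gives
\[
\mathbb E\!\int|\widehat a_m-a_m|^2\,dQ\le c\,R_a^2\bigl(n^{-2/(2+d)}+D/n\bigr)\log^3 n .
\]
Here $R_a^2=4rU^4/\gamma_r^2$, and summing over the $r$ components gives the $r^2U^4/\gamma_r^2$ scaling.

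The delicate points are the following:
\begin{itemize}
\item verifying that $a_m$ is Lipschitz on $\mathcal M$ with a constant controlled by $L_{\mathbf\Psi}U/\gamma_r$, using $a_m=G_m^{-1}b_m$ and the Lipschitz property of $\mathbf\Psi$ in $\mu$;
\item checking that the conditioning on the independent skeleton sample makes the training inputs i.i.d., so that the cited bound applies;
\item tracking the scaling with $R$ into the constant $c$.
\end{itemize}

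\emph{Step 4: integrate and finish.} Take the expectation and integrate over $Q$, which yields \eqref{eq:totalerror}.

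For the final claim, Lemma~\ref{lem:nu_bound_mult} is applied to $\Psi^{\rm skel}$ with $C_{\rm det}(r)=C_r$. Because the max-volume selection attains $\Delta^{\rm disc}_r$, it follows that $|\det\mathbf\Psi(I_\mu,J_x)|=\Delta_r^{\rm disc}\ge\theta\Delta^{\rm cont}_r$.
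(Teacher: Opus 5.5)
Your proposal is correct and follows the paper's proof essentially step for step. You use Lemma~\ref{lemma:OT_distance} with \eqref{eq:H1seminormbd} to pass to signature space, the same $4/4/2$ split into low-rank, discretization, and statistical terms bounded as in Lemmas~\ref{lemma:low_rank_error}, \ref{lemma:disc_error}, and~\ref{propGen}, and Lemma~\ref{lem:nu_bound_mult} for the volume condition. The differences are cosmetic (applying \eqref{eq:H1seminormbd} once to the total error before splitting, and the equivalent perturbation identity with $G^{-1}$ rather than $G_m^{-1}$ on the left), plus your explicit note that the final claim requires the selected submatrix to attain $\Delta_r^{\rm disc}$ exactly, which the paper leaves implicit.
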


The proof decomposes the total error into three terms. The first is the deterministic low-rank approximation error from representing $\mathbf{\Psi}$ by the span of the selected signatures. The second is the discretization error from replacing continuous $L^2(\Omega,dx)$ projections by grid-based least-squares projections. The third is the statistical error from learning the parameter-to-coefficient map $a_m$ from finitely many training samples. We estimate these three contributions separately and then combine them in the proof of Theorem~\ref{mainthm}.

% \begin{remark}
% If the singular values decay at the rate $\sigma_k(H)\asymp k^{-1}$, then
% the skeleton contribution satisfies
% \[
% \frac{(r+1)^2}{\sum_{k=1}^{r+1}\sigma_k^{-2}(H)}
% \asymp
% \frac{1}{r}.
% \]
% Thus, if $\nu$, $\gamma_r$, and $\Gamma_r$ remain uniformly bounded away
% from degeneracy, and if the discretization and statistical constants are treated as fixed, the leading terms behave like $\frac{C_1}{r}+r^2C_2$. Under these additional assumptions, the optimal rank is of order $r\approx \left(\frac{C_1}{2C_2}\right)^{1/3}$.
% \end{remark}

\paragraph{$\bullet$ Low-rank approximation error.}
We first control the error from approximating the transport-signature kernel $\mathbf{\Psi}$ by the span of the selected signatures $\{\psi_s\}_{s=1}^r$. Assumption~\ref{assump4} gives a skeleton approximation bound in $L^\infty(\mathcal M\times\Omega)$; the following lemma transfers this bound to the continuous $L^2(\Omega,dx)$ projection $\mathbf{\Psi}^{*r}$ used in the ROM construction.

\begin{lemma}[Low-rank error]\label{lemma:low_rank_error}
Let $\mathbf{\Psi}^{*r}$ be the continuous $L^2(\Omega,dx)$ projection defined in~\eqref{eq:r-approx_star}. Suppose Assumptions~\ref{assump1}--\ref{assump4} hold. Then
\begin{equation}
    \left\|
    \mathbf{\Psi}-\mathbf{\Psi}^{*r}
    \right\|_{L^2(Q;L^2(dx))}
    \leq
    \frac{C(r+1)}{\theta}
    \sigma_{r+1}(\mathbf{\Psi}),
\end{equation}
where $C$ is the constant from Assumption~\ref{assump4}.
\end{lemma}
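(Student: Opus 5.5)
The plan is to compare the $L^2(\Omega,dx)$-orthogonal projection $\mathbf{\Psi}^{*r}(\mu,\cdot)$ with the continuous skeleton approximation $\mathbf{\Psi}_r^{\rm skel}(\mu,\cdot)$ from~\eqref{eq:psi_ske}, pointwise in $\mu$, and then pass from the uniform bound of Assumption~\ref{assump4} to the mixed $L^2(Q;L^2(dx))$ norm. The key observation is that, for each fixed $\mu$, the skeleton approximation lies in the selected row span:
\[
\mathbf{\Psi}_r^{\rm skel}(\mu,x)=\sum_{s=1}^r c_s(\mu)\,\mathbf{\Psi}(\nu_s,x)=\sum_{s=1}^r c_s(\mu)\,\psi_s(x),
\qquad c(\mu)^\top:=\mathbf{\Psi}(\mu,J_x)[\mathbf{\Psi}(I_\mu,J_x)]^{-1}.
\]
The inverse exists because $|\det\mathbf{\Psi}(I_\mu,J_x)|\ge\theta\Delta_r^{\rm cont}>0$ by Assumptions~\ref{assump2} and~\ref{assump4}. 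Thus $\mathbf{\Psi}_r^{\rm skel}(\mu,\cdot)\in\operatorname{span}\{\psi_1,\ldots,\psi_r\}$. Here one should check that the basis in~\eqref{eq:psi} uses the same $\nu_s$ as $I_\mu$ in Assumption~\ref{assump4}; it does under the sample-splitting convention, where $\nu_s=\mu^{\rm skel}_{i_s}$.

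Next, by Assumption~\ref{assump3}(i) the Gram matrix $G$ is invertible, so $\mathbf{\Psi}^{*r}(\mu,\cdot)$ is the $L^2(\Omega,dx)$ best approximation of $\mathbf{\Psi}(\mu,\cdot)$ from this span. The best-approximation property gives, for every $\mu\in\mathcal M$,
\[
\|\mathbf{\Psi}(\mu,\cdot)-\mathbf{\Psi}^{*r}(\mu,\cdot)\|_{L^2(dx)}
\le
\|\mathbf{\Psi}(\mu,\cdot)-\mathbf{\Psi}_r^{\rm skel}(\mu,\cdot)\|_{L^2(dx)}.
\]
Since $dx$ is the normalized Lebesgue measure on $\Omega$, a probability measure, the right-hand side is at most $\sup_{x\in\Omega}|\mathbf{\Psi}(\mu,x)-\mathbf{\Psi}_r^{\rm skel}(\mu,x)|$.

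Finally, I will square this, integrate against $Q$ (also a probability measure), take square roots, and bound the result by $\|\mathbf{\Psi}-\mathbf{\Psi}_r^{\rm skel}\|_{L^\infty(\mathcal M\times\Omega)}$. Assumption~\ref{assump4} then gives $\frac{C(r+1)}{\theta}\sigma_{r+1}(\mathbf{\Psi})$.

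The only delicate points are measurability of $\mu\mapsto\|\mathbf{\Psi}(\mu,\cdot)-\mathbf{\Psi}^{*r}(\mu,\cdot)\|$ and well-definedness of the sup. Both follow from the continuity in Assumption~\ref{assump2}: $a^*(\mu)=G^{-1}b(\mu)$ is continuous in $\mu$, since $b$ is Lipschitz. The main obstacle is conceptual rather than technical, namely making sure that the skeleton's row set coincides with the reduced basis, so that the projection inequality applies.
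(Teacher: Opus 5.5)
Your proposal is correct and follows essentially the same route as the paper's proof. You observe that $\mathbf{\Psi}_r^{\rm skel}(\mu,\cdot)$ lies in $\operatorname{span}\{\psi_1,\ldots,\psi_r\}$, so the orthogonal projection $\mathbf{\Psi}^{*r}$ does at least as well; since $Q$ and $dx$ are probability measures, the $L^2(Q;L^2(dx))$ norm is dominated by the $L^\infty(\mathcal M\times\Omega)$ norm, and Assumption~\ref{assump4} finishes. Your pointwise-in-$\mu$ phrasing, and your checks on invertibility of $\mathbf{\Psi}(I_\mu,J_x)$ and on measurability, are just slightly more explicit versions of the same argument.
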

\begin{proof}
By definition, for each fixed $\mu$, $\mathbf{\Psi}^{*r}(\mu,\cdot)$ is the
$L^2(\Omega,dx)$-orthogonal projection of $\mathbf{\Psi}(\mu,\cdot)$ onto
$\operatorname{span}\{\psi_1,\ldots,\psi_r\}$. Therefore, for any admissible
coefficient map $\widetilde a:\mathcal M\to\mathbb R^r$,
\[
\left\|
\mathbf{\Psi}-\mathbf{\Psi}^{*r}
\right\|_{L^2(Q;L^2(dx))}
\le
\left\|
\mathbf{\Psi}-\widetilde a(\cdot)^\top\psi(\cdot)
\right\|_{L^2(Q;L^2(dx))}.
\]
Taking
\[
\widetilde a(\mu)^\top
=
\mathbf{\Psi}(\mu,J_x)
[\mathbf{\Psi}(I_\mu,J_x)]^{-1}
\]
gives
\[
\left\|
\mathbf{\Psi}-\mathbf{\Psi}^{*r}
\right\|_{L^2(Q;L^2(dx))}
\le
\left\|
\mathbf{\Psi}-\mathbf{\Psi}_r^{\rm skel}
\right\|_{L^2(Q;L^2(dx))}.
\]
Since $Q$ and $dx$ are probability measures,
\[
\left\|
\mathbf{\Psi}-\mathbf{\Psi}_r^{\rm skel}
\right\|_{L^2(Q;L^2(dx))}
\le
\left\|
\mathbf{\Psi}-\mathbf{\Psi}_r^{\rm skel}
\right\|_{L^\infty(\mathcal M\times\Omega)}.
\]
The result now follows from Assumption~\ref{assump4}.
\end{proof}

\paragraph{$\bullet$ Discretization error.}

We next compare the ideal continuous projection $\mathbf{\Psi}^{*r}$ with the grid-based projection $\mathbf{\Psi}_m^r$ used by the algorithm. This is the only place where the spatial quadrature error enters the deterministic part of the analysis.

\begin{lemma}[Discretization error]\label{lemma:disc_error}
Suppose Assumptions~\ref{assump1}--\ref{assump3} hold. Then
\begin{equation}
\|\mathbf{\Psi}^{*r}-\mathbf{\Psi}_m^r\|_{L^2(Q;L^2(dx))}^2
\le
C_1\,\Gamma_r\,h_x(m)^2,
\end{equation}
where $C_1$ depends only on
$r,\gamma_r,U,L_{\mathbf{\Psi}},C_{\rm quad}$.
\end{lemma}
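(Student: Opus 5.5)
Since $\mathbf{\Psi}^{*r}(\mu,x)-\mathbf{\Psi}_m^r(\mu,x)=(a^*(\mu)-a_m(\mu))^\top\psi(x)$, the plan is to reduce everything to a pointwise-in-$\mu$ bound on the coefficient discrepancy. First, for each fixed $\mu$,
\[
\|\mathbf{\Psi}^{*r}(\mu,\cdot)-\mathbf{\Psi}_m^r(\mu,\cdot)\|_{L^2(dx)}^2
=(a^*(\mu)-a_m(\mu))^\top G\,(a^*(\mu)-a_m(\mu))
\le \Gamma_r\,\|a^*(\mu)-a_m(\mu)\|_2^2 ,
\]
which explains the factor $\Gamma_r$. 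It then suffices to show $\|a^*(\mu)-a_m(\mu)\|_2\le C\,h_x(m)$ uniformly in $\mu\in\mathcal M$, with $C$ depending only on $r,\gamma_r,U,L_{\mathbf\Psi},C_{\rm quad}$. Integrating against the probability measure $Q$ then gives the claim with $C_1=C^2$.

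Second, I would bound the quadrature perturbations of $G$ and $b(\mu)$ entrywise using Assumption~\ref{assump3}(ii). The entries of $G$ are $\int_\Omega\psi_s\psi_t\,dx$, and $f=\psi_s\psi_t$ is Lipschitz in $x$ with constant at most $2UL_{\mathbf\Psi}$ by Assumption~\ref{assump2}. Likewise, the entries of $b(\mu)$ integrate $f=\mathbf{\Psi}(\mu,\cdot)\psi_s$, again with Lipschitz constant at most $2UL_{\mathbf\Psi}$ uniformly in $\mu$. Hence each entry error is at most $2C_{\rm quad}UL_{\mathbf\Psi}h_x(m)$. Passing to norms via the Frobenius norm gives
\[
\|G-G_m\|_2\le 2rC_{\rm quad}UL_{\mathbf\Psi}h_x(m),\qquad
\|b(\mu)-b_m(\mu)\|_2\le 2\sqrt r\,C_{\rm quad}UL_{\mathbf\Psi}h_x(m).
\]
By Assumption~\ref{assump3}(iii), the first quantity is at most $\gamma_r/2$, so Weyl's inequality gives $\lambda_{\min}(G_m)\ge\gamma_r/2$ and $\|G_m^{-1}\|_2\le 2/\gamma_r$.

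Third, I would use the standard perturbation identity for linear systems:
\[
a^*-a_m=G^{-1}b-G_m^{-1}b_m=G_m^{-1}(G_m-G)G^{-1}b+G_m^{-1}(b-b_m).
\]
Here $\|G^{-1}b(\mu)\|_2\le\gamma_r^{-1}\|b(\mu)\|_2\le\gamma_r^{-1}\sqrt r\,U^2$, since each entry of $b$ is bounded by $U^2$ (recall $dx$ is normalized). Combining the three bounds gives $\|a^*(\mu)-a_m(\mu)\|_2\le C\,h_x(m)$ with, for instance,
\[
C=\frac{2}{\gamma_r}\Bigl(2rC_{\rm quad}UL_{\mathbf\Psi}\frac{\sqrt r\,U^2}{\gamma_r}+2\sqrt r\,C_{\rm quad}UL_{\mathbf\Psi}\Bigr).
\]

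The only delicate step is the invertibility and uniform conditioning of $G_m$. This is exactly what Assumption~\ref{assump3}(iii) is calibrated for: the constant $4$ there yields the margin $\gamma_r/2$. The other point requiring care is that the Lipschitz constants of the integrands must be uniform in $\mu$, which Assumption~\ref{assump2} supplies. Everything else is routine bookkeeping.
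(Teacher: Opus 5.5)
Your proposal is correct and follows the paper's proof essentially step for step. The shared steps are:
\begin{itemize}
\item the reduction $\|(a^*-a_m)^\top\psi\|_{L^2(dx)}^2=(a^*-a_m)^\top G(a^*-a_m)\le\Gamma_r\|a^*-a_m\|_2^2$;
\item entrywise quadrature bounds with Lipschitz constant $2UL_{\mathbf\Psi}$;
\item Weyl's inequality via Assumption~\ref{assump3}(iii), giving $\|G_m^{-1}\|_2\le 2/\gamma_r$;
\item the perturbation identity $a^*-a_m=G_m^{-1}(b-b_m)+G_m^{-1}(G_m-G)a^*$.
\end{itemize}
The differences are cosmetic. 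You bound the coefficient error uniformly in $\mu$ before integrating, whereas the paper works directly in $L^2(Q;\mathbb R^r)$. Your constant $\frac{4C_{\rm quad}UL_{\mathbf\Psi}}{\gamma_r}(\sqrt r+rR_*)$ is slightly sharper than the paper's stated $\frac{4C_{\rm quad}UL_{\mathbf\Psi}\sqrt r}{\gamma_r}(1+rR_*)$.
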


\begin{proof}
Notice that
\[
\mathbf{\Psi}^{*r}(\mu,\cdot)-\mathbf{\Psi}_m^r(\mu,\cdot)
=
\sum_{s=1}^r
\big(a^*_{s}(\mu)-a_{m,s}(\mu)\big)\psi_s(\cdot).
\]
Therefore,
\[
\|\mathbf{\Psi}^{*r}-\mathbf{\Psi}_m^{r}\|_{L^2(Q;L^2(dx))}^2
=
\int_{\mathcal M} \big(a^*(\mu)-a_m(\mu)\big)^\top G
\big(a^*(\mu)-a_m(\mu)\big)\,dQ(\mu)
\le
\Gamma_r \|a^*-a_m\|_{L^2(Q;\mathbb R^r)}^2.
\]
Thus it remains to estimate $\|a^*-a_m\|_{L^2(Q;\mathbb R^r)}$.

We first control $G_m-G$. For each $i,j\in\{1,\ldots,r\}$, the function
$x\mapsto \psi_i(x)\psi_j(x)$ is Lipschitz and satisfies
\[
\mathrm{Lip}(\psi_i\psi_j)
\le
\|\psi_i\|_{L^\infty}\mathrm{Lip}(\psi_j)
+
\|\psi_j\|_{L^\infty}\mathrm{Lip}(\psi_i)
\le 2UL_{\mathbf{\Psi}}.
\]
Hence, by Assumption~\ref{assump3}  (ii),
\[
\big|(G_m-G)_{ij}\big|
\le
2C_{\rm quad}UL_{\mathbf{\Psi}}\,h_x(m).
\]
Therefore,
\[
\|G_m-G\|_2
\le
r\max_{i,j}|(G_m-G)_{ij}|
\le
2C_{\rm quad}rUL_{\mathbf{\Psi}}\,h_x(m).
\]
Since Assumption~\ref{assump3}  (iii) gives
\[
h_x(m)\le \frac{\gamma_r}{4C_{\rm quad}rUL_{\mathbf{\Psi}}},
\]
we have
\[
\|G_m-G\|_2\le \frac{\gamma_r}{2}.
\]
By Weyl's inequality,
\[
\lambda_{\min}(G_m)\ge \frac{\gamma_r}{2},
\qquad
\|G_m^{-1}\|_2\le \frac{2}{\gamma_r}.
\]

Next we control $b_m-b$. For each $i\in\{1,\ldots,r\}$ and fixed
$\mu\in\mathcal M$, the function
\[
x\mapsto \mathbf{\Psi}(\mu,x)\psi_i(x)
\]
is Lipschitz with constant at most $2UL_{\mathbf{\Psi}}$. Hence
\[
\left|(b_m(\mu)-b(\mu))_i\right|
\le
2C_{\rm quad}UL_{\mathbf{\Psi}}\,h_x(m),
\]
and therefore
\[
\|b_m-b\|_{L^2(Q;\mathbb R^r)}
\le
2\sqrt r\,C_{\rm quad}UL_{\mathbf{\Psi}}\,h_x(m).
\]

By definition,
\[
a^*(\mu)=G^{-1}b(\mu),
\qquad
a_m(\mu)=G_m^{-1}b_m(\mu).
\]
Thus, for $Q$-a.e.\ $\mu$,
\[
a^*(\mu)-a_m(\mu)
=
G_m^{-1}(b(\mu)-b_m(\mu))
+
G_m^{-1}(G_m-G)a^*(\mu).
\]
Taking $L^2(Q;\mathbb R^r)$ norms gives
\[
\|a^*-a_m\|_{L^2(Q;\mathbb R^r)}
\le
\|G_m^{-1}\|_2
\Big(
\|b-b_m\|_{L^2(Q;\mathbb R^r)}
+
\|G_m-G\|_2\,\|a^*\|_{L^2(Q;\mathbb R^r)}
\Big).
\]
Moreover, since
\[
\|b(\mu)\|_2\le \sqrt r\,U^2,
\]
we have
\[
\|a^*(\mu)\|_2
\le
\frac{\sqrt r\,U^2}{\gamma_r}
=:R_*.
\]
Combining the previous bounds yields
\[
\|a^*-a_m\|_{L^2(Q;\mathbb R^r)}
\le
\frac{4C_{\rm quad}UL_{\mathbf\Psi}\sqrt r}{\gamma_r}
\left(1+rR_*\right)h_x(m),
\]
which finishes the proof.
\end{proof}

\paragraph{$\bullet$ Statistical error.}
Finally, we estimate the error caused by learning the coefficient map $a_m:\mathcal M\to\mathbb R^r$ from finitely many parameter samples. For this part of the analysis, it is convenient to use a sample-splitting convention: the skeleton basis is first selected and then held fixed, and an independent training sample is used to learn $a_m$.

We work under the sample-splitting convention introduced at the beginning of this section. Thus the selected basis and spatial grid are fixed, and the only randomness in the following estimate comes from the independent training sample $\{\mu_j^{\rm train}\}_{j=1}^n$ used to learn $a_m$.

The mean-squared statistical error satisfies the following lemma, whose proof uses \citep[Theorem~2]{chen2022nonparametric}.

\begin{lemma}[Statistical error]\label{propGen}
Suppose Assumptions~\ref{assump1}--\ref{assump3} hold. Conditional on the selected basis and spatial grid, let $\{\mu_j^{\rm train}\}_{j=1}^n$ be i.i.d.\ samples from $Q$. Let $\widehat a_m$ be the empirical risk minimizer in~\eqref{eq:hata} over the feedforward neural network class $\mathcal{N}_r(R,\kappa,L,p,K)$ defined in~\eqref{NN}, with parameters chosen as
\begin{align*}
L &= \tilde{O}\left(\frac{1}{2+d}\log n\right),
\qquad
p=\tilde{O}\left(rn^{\frac{d}{2+d}}\right),
\qquad
K=\tilde{O}\left(\frac{r}{2+d}n^{\frac{d}{2+d}}\log n\right),\\
R&\ge \frac{2\sqrt r\,U^2}{\gamma_r},
\qquad
\kappa=O(\max\{1,b,\sqrt{d},\tau^2_{\mathcal{M}}\}).
\end{align*}
Then
\begin{equation}
\mathbb{E}\!\left[
\|\mathbf{\Psi}_m^r-\widehat{\mathbf{\Psi}}_{n,m}^r\|_{L^2(Q;L^2(dx))}^2
\right]
\leq
c\Gamma_r \frac{r^2\,U^4}{\gamma_r^2}
\left(n^{-\frac{2}{2+d}}+\frac{D}{n}\right)\log^3n,
\end{equation}
where the expectation is taken over the training samples and $c$ is a constant depending on $\log D$, $d$, $b$, $\kappa$, $\mathcal{M}$, and the coefficient Lipschitz radius.
\end{lemma}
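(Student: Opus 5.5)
The plan is to reduce the statement to a vector-valued nonparametric regression over the manifold $\mathcal M$ and then invoke \citep[Theorem~2]{chen2022nonparametric}. First I would convert the signature error into a coefficient error. Since $\mathbf{\Psi}_m^r(\mu,\cdot)-\widehat{\mathbf{\Psi}}_{n,m}^r(\mu,\cdot)=(a_m(\mu)-\widehat a_m(\mu))^\top\psi(\cdot)$, the same Gram computation as in Lemma~\ref{lemma:disc_error} gives
\[
\|\mathbf{\Psi}_m^r-\widehat{\mathbf{\Psi}}_{n,m}^r\|_{L^2(Q;L^2(dx))}^2
=\int_{\mathcal M}(a_m-\widehat a_m)^\top G\,(a_m-\widehat a_m)\,dQ
\le \Gamma_r\,\|a_m-\widehat a_m\|_{L^2(Q;\mathbb R^r)}^2 .
\]
It therefore suffices to bound $\mathbb E\|a_m-\widehat a_m\|^2_{L^2(Q)}$ by $c\,\frac{r^2U^4}{\gamma_r^2}\bigl(n^{-2/(2+d)}+D/n\bigr)\log^3 n$.

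Next I would verify the hypotheses of the regression theorem for the target $a_m$, working conditionally on the selected basis and grid. The data are noiseless, $y_j=a_m(\mu_j^{\rm train})$ with $\mu_j^{\rm train}$ i.i.d.\ from $Q$, and $Q$ lives on a compact $d$-dimensional manifold of positive reach (Assumption~\ref{assump1}) with density bounded above and below.

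\textbf{Boundedness.} From the proof of Lemma~\ref{lemma:disc_error}, Assumption~\ref{assump3}(iii) gives $\|G_m^{-1}\|_2\le 2/\gamma_r$. Also $\|b_m(\mu)\|_2\le\sqrt r\,U^2$. Hence $\|a_m(\mu)\|_2\le 2\sqrt r U^2/\gamma_r=R_a$, so the choice $R\ge R_a$ makes each component of $a_m$ bounded by $R$ and lets the truncated network class contain good approximants.

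\textbf{Lipschitz regularity.} $a_m(\mu)=G_m^{-1}\frac1m\sum_k\mathbf{\Psi}(\mu,x_k)\psi(x_k)$, and $\mu\mapsto\mathbf{\Psi}(\mu,x_k)$ is $L_{\mathbf\Psi}$-Lipschitz in $d_{\mathcal M}$. So each component is Lipschitz with constant at most $2\sqrt r\,UL_{\mathbf\Psi}/\gamma_r$, with no dependence on $m$. This constant is the ``coefficient Lipschitz radius'' that gets absorbed into $c$.

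I would then apply \citep[Theorem~2]{chen2022nonparametric} componentwise to $a_{m,s}$, $s=1,\dots,r$. Each scalar problem uses a subnetwork of depth $\tilde O(\log n/(2+d))$, width $\tilde O(n^{d/(2+d)})$ and $\tilde O(n^{d/(2+d)}\log n/(2+d))$ nonzero weights. Stacking the $r$ subnetworks in parallel gives exactly the stated $p$ and $K$, each carrying the factor $r$.

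The $\mathbb R^r$-valued least-squares ERM is a sum of scalar losses, and the parallel architecture decouples the parameters. So the vector ERM restricted to this block structure coincides with the componentwise ERMs. If the class is instead the full joint class, I would rerun the oracle-inequality argument with covering numbers multiplied by $r$, which only changes log factors.

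Each component error scales as $R^2(n^{-2/(2+d)}+D/n)\log^3 n$, with $R^2\sim rU^4/\gamma_r^2$. Summing over $r$ components gives the factor $r^2U^4/\gamma_r^2$, and multiplying by $\Gamma_r$ finishes the proof.

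The main obstacle is matching the scaling in the cited theorem exactly. Theorem~2 there is stated for functions with a given Lipschitz constant and sup bound, possibly with normalizations such as functions into $[0,1]$ or unit Lipschitz constant. I would handle this by rescaling $a_{m,s}/R$, applying the theorem to the rescaled function, and multiplying back by $R^2$. The Lipschitz ratio $L_{\mathbf\Psi}/U$, together with the dependence on $b$, $\kappa$, $\tau_{\mathcal M}$ and $\log D$, would be pushed into $c$.
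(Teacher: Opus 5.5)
Your proposal is correct and follows the paper's proof step for step. Both use the Gram bound $e_a^\top G\,e_a\le\Gamma_r\|e_a\|_2^2$, then the bounds $\|a_m\|_2\le 2\sqrt r\,U^2/\gamma_r$ and $\mathrm{Lip}(a_{m,s})\le 2\sqrt r\,UL_{\mathbf\Psi}/\gamma_r$ obtained from $\|G_m^{-1}\|_2\le 2/\gamma_r$, and finally a componentwise application of \citep[Theorem~2]{chen2022nonparametric} summed over $s=1,\ldots,r$. You also point out that the $\mathbb R^r$-valued ERM over the joint class $\mathcal N_r$ splits into scalar ERMs only for a block-parallel architecture, and that otherwise the oracle inequality must be redone. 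This is a point the paper's proof passes over when it applies the scalar theorem directly to each $a_{m,s}$.
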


\begin{proof}[Proof of Lemma~\ref{propGen}]
Since
\[
\mathbf{\Psi}_m^r(\mu,\cdot)=\sum_{s=1}^{r} a_{m,s}(\mu)\psi_s(\cdot),
\qquad
\widehat{\mathbf{\Psi}}_{n,m}^r(\mu,\cdot)
=
\sum_{s=1}^{r} \widehat{a}_{m,s}(\mu)\psi_s(\cdot),
\]
we have
\[
\mathbf{\Psi}_m^r(\mu,\cdot)-\widehat{\mathbf{\Psi}}_{n,m}^r(\mu,\cdot)
=
\sum_{s=1}^{r}
\big(a_{m,s}(\mu)-\widehat a_{m,s}(\mu)\big)\psi_s(\cdot).
\]
Let
\[
e_a(\mu):=
(a_{m,1}(\mu)-\widehat a_{m,1}(\mu),\ldots,
a_{m,r}(\mu)-\widehat a_{m,r}(\mu))^\top.
\]
Then, for each fixed $\mu$,
\[
\left\|
\sum_{s=1}^r
\big(a_{m,s}(\mu)-\widehat a_{m,s}(\mu)\big)\psi_s(\cdot)
\right\|^2_{L^2(dx)}
=
e_a(\mu)^\top G\, e_a(\mu)
\le
\Gamma_r\|e_a(\mu)\|_2^2.
\]
Hence
\begin{align*}
\mathbb{E}\!\left[
\|\mathbf{\Psi}_m^r-\widehat{\mathbf{\Psi}}_{n,m}^r\|_{L^2(Q;L^2(dx))}^2
\right]
&\le
\Gamma_r
\sum_{s=1}^{r}
\mathbb{E}\left[
\int_{\mathcal M}
\big(a_{m,s}(\mu)-\widehat a_{m,s}(\mu)\big)^2\,dQ(\mu)
\right].
\end{align*}

We next verify that the target coefficient functions $a_{m,s}$ satisfy the
boundedness and Lipschitz conditions required by the neural-network theorem.
Since
\[
a_m(\mu)=G_m^{-1}b_m(\mu),
\]
we have
\[
\|a_m(\mu)\|_2
\le
\|G_m^{-1}\|_2\,\|b_m(\mu)\|_2.
\]
For each $s$,
\[
|(b_m(\mu))_s|
=
\left|
\frac1m\sum_{k=1}^m \mathbf{\Psi}(\mu,x_k)\psi_s(x_k)
\right|
\le U^2.
\]
Thus
\[
\|b_m(\mu)\|_2\le \sqrt r\,U^2.
\]
Using $\|G_m^{-1}\|_2\le 2/\gamma_r$, we obtain
\[
\|a_m(\mu)\|_2
\le
\frac{2\sqrt r\,U^2}{\gamma_r}.
\]
In particular,
\[
\|a_{m,s}\|_{L^\infty(\mathcal M)}
\le
\frac{2\sqrt r\,U^2}{\gamma_r},
\qquad s=1,\ldots,r.
\]

Similarly, for $\mu,\mu'\in\mathcal M$,
\[
|(b_m(\mu)-b_m(\mu'))_s|
\le
\frac1m\sum_{k=1}^m
|\mathbf{\Psi}(\mu,x_k)-\mathbf{\Psi}(\mu',x_k)|
\,|\psi_s(x_k)|
\le
UL_{\mathbf\Psi}d_{\mathcal M}(\mu,\mu').
\]
Therefore,
\[
\|b_m(\mu)-b_m(\mu')\|_2
\le
\sqrt r\,UL_{\mathbf\Psi}d_{\mathcal M}(\mu,\mu'),
\]
and hence
\[
\|a_m(\mu)-a_m(\mu')\|_2
\le
\frac{2\sqrt r\,UL_{\mathbf\Psi}}{\gamma_r}
d_{\mathcal M}(\mu,\mu').
\]
Thus each coordinate $a_{m,s}$ is Lipschitz with constant at most $\frac{2\sqrt r\,UL_{\mathbf\Psi}}{\gamma_r}$.

Applying \citep[Theorem 2]{chen2022nonparametric} to each coefficient function
$a_{m,s}$, we obtain 
%\stnote{I am not sure if this can be directly applied here since the reference is for a scalar function, if we do least square for each component separately, this will provide us $\hat{a}_{m,s}$ different from the one we learn as an entire vector?} \jy{I think it can be applied here. Since least square is done before NN regression. In this step, $a_{m,s}$ are given and fixed and we apply the theory directly to each of them.}
\[
\mathbb{E}\left[
\int_{\mathcal{M}}
\big(a_{m,s}(\mu)-\widehat{a}_{m,s}(\mu)\big)^2\,dQ(\mu)
\right]
\le
c
\left(\frac{2\sqrt r\,U^2}{\gamma_r}\right)^2
\left(n^{-\frac{2}{2+d}}+\frac{D}{n}\right)\log^3n.
\]
Summing over $s=1,\ldots,r$ gives
\[
\mathbb{E}\!\left[
\|\mathbf{\Psi}_m^r-\widehat{\mathbf{\Psi}}_{n,m}^r\|_{L^2(Q;L^2(dx))}^2
\right]
\le
c\Gamma_r \frac{r^2\,U^4}{\gamma_r^2}
\left(n^{-\frac{2}{2+d}}+\frac{D}{n}\right)\log^3n.
\]

The right-hand side has no explicit $m$-dependence because the bound
$\|G_m^{-1}\|_2\le 2/\gamma_r$ is uniform once
Assumption~\ref{assump3}(iii) holds. The $m$-dependence is isolated in the
discretization error.
\end{proof}

\paragraph{$\bullet$ Combining the errors together.} We now combine the three estimates. The transport-map error is first converted to a $W_2$ error using Lemma~\ref{lemma:OT_distance}. The definition of the transport signature then converts the transport-map error into an $L^2(Q;L^2(dx))$ error for $\mathbf{\Psi}$. Finally, the latter is decomposed into the low-rank, discretization, and statistical terms estimated above.

\begin{proof}[Proof of Theorem~\ref{mainthm}]
The $2$-Wasserstein distance in~\eqref{eq:totalerror} can be bounded by the $\bar \rho$-weighted $L^2$ error between the corresponding transport maps as a result of Lemma~\ref{lemma:OT_distance}. Indeed, the coupling
$\big(\mathbf{T}(\mu,\cdot),\widehat{\mathbf T}_{n,m}^r(\mu,\cdot)\big)_\#\bar\rho$
gives
\[
W_2^2\left(\rho(\mu),\widehat\rho_{n,m}^r(\mu)\right)
\le
\|\mathbf{T}(\mu,\cdot)-\widehat{\mathbf T}_{n,m}^r(\mu,\cdot)\|_{L^2(\bar\rho)}^2.
\]
Define
\[
\mathbf{\Phi}_m^r:=(I-\mathcal L)^{-1/2}\mathbf{\Psi}_m^r,
\qquad
\widehat{\mathbf{\Phi}}_{n,m}^r:=(I-\mathcal L)^{-1/2}\widehat{\mathbf{\Psi}}_{n,m}^r.
\]
Then
\begin{align} 
&\quad\mathbb{E}\!\left[\int_{\mathcal{M}}
W_2^2\!\left(\rho(\mu),\widehat{\rho}^r_{n,m}(\mu)\right)\,dQ(\mu) \right]
\nonumber \\
&\leq
\mathbb{E}\!\left[\int_{\mathcal{M}}
\|\nabla_x\mathbf{\Phi}(\mu,\cdot)
-\nabla_x\widehat{\mathbf{\Phi}}^r_{n,m}(\mu,\cdot)\|^2_{L^2(\bar{\rho})}
\,dQ(\mu)\right]
\nonumber\\
&\leq
2\int_{\mathcal{M}}
\|\nabla_x\mathbf{\Phi}(\mu,\cdot)
-\nabla_x\mathbf{\Phi}_m^r(\mu,\cdot)\|^2_{L^2(\bar{\rho})}
\,dQ(\mu)
\nonumber +
2\mathbb{E}\!\left[\int_{\mathcal{M}}
\|\nabla_x\mathbf{\Phi}_m^r(\mu,\cdot)
-\nabla_x\widehat{\mathbf{\Phi}}^r_{n,m}(\mu,\cdot)\|^2_{L^2(\bar{\rho})}
\,dQ(\mu)\right]
\nonumber\\
&\leq
2\int_{\mathcal{M}}
\|\mathbf{\Psi}(\mu,\cdot)-\mathbf{\Psi}_m^r(\mu,\cdot)\|^2_{L^2(dx)}
\,dQ(\mu)
\nonumber +
2\mathbb{E}\!\left[\int_{\mathcal{M}}
\|\mathbf{\Psi}_m^r(\mu,\cdot)-\widehat{\mathbf{\Psi}}^r_{n,m}(\mu,\cdot)\|^2_{L^2(dx)}
\,dQ(\mu)\right]
\nonumber\\
&=
2\|\mathbf{\Psi}-\mathbf{\Psi}_m^r\|_{L^2(Q;L^2(dx))}^2
+
2\mathbb{E}\!\left[
\|\mathbf{\Psi}_m^r-\widehat{\mathbf{\Psi}}_{n,m}^r\|_{L^2(Q;L^2(dx))}^2
\right]
\nonumber\\
&\leq \label{eq:errordecomp}
4 \underbrace{\|\mathbf{\Psi}-\mathbf{\Psi}^{*r}\|_{L^2(Q;L^2(dx))}^2}_{\text{low-rank error}}
+
4\underbrace{\|\mathbf{\Psi}^{*r}-\mathbf{\Psi}_m^{r}\|_{L^2(Q;L^2(dx))}^2}_{\text{discretization error}} +
2\underbrace{\mathbb{E}\!\left[
\|\mathbf{\Psi}_m^r-\widehat{\mathbf{\Psi}}_{n,m}^r\|_{L^2(Q;L^2(dx))}^2
\right]}_{\text{statistical error}} \\
\nonumber
&\leq
\frac{4C^2(r+1)^2\sigma^2_{r+1}(\mathbf{\Psi})}{\theta^2}  
+ 4C_1\,\Gamma_r\,h_x(m)^2 
+2c\Gamma_r \frac{r^2\,U^4}{\gamma_r^2}
\left(n^{-\frac{2}{2+d}}+\frac Dn\right)\log^3n,
\end{align}
as desired.
The first inequality follows from the coupling induced by the two maps from the
reference measure $\bar\rho$; see Lemma~\ref{lemma:OT_distance}. The second and fourth inequalities follow from $\|A-C\|^2\le 2\|A-B\|^2+2\|B-C\|^2$. The third inequality uses the identity \eqref{eq:H1seminormbd}.
%\stnote{I assume the following can directly refer to the part of eq (19)? maybe needs to specify the domain $\Omega$ in the $L^2$ norm since it uses the self-adjoint and boundary vanish.}
% \[
% \|(I-\mathcal L)^{1/2}f\|_{L^2(dx)}^2
% =
% \|f\|_{L^2(dx)}^2+\|\nabla_x f\|_{L^2(\bar\rho)}^2,
% \]
% and therefore
% \[
% \|\nabla_x f\|_{L^2(\bar\rho)}
% \le
% \|(I-\mathcal L)^{1/2}f\|_{L^2(dx)}.
% \]
Moreover, the last part of the statement in Theorem~\ref{mainthm} follows directly from Lemma~\ref{lem:nu_bound_mult}. This completes the proof.
\end{proof}

\section{Numerical results}
\label{sec:num}

We demonstrate the proposed method on a two-dimensional continuity equation. All numerical experiments are implemented in Python on a PC with an Apple M5 Pro chip and 24 GB of memory\footnote{The code to reproduce the numerical results in this paper is available at \url{https://github.com/jiajia-yu/rom_transport}.}.

Consider the continuity equation on $[0,1]^2$ with periodic boundary conditions
\begin{equation}
\left\{\begin{aligned}
    \partial_t\rho + \nabla\cdot(\rho v) = 0, &\quad(x,t)\in [0,1]^2\times(0,1],\\
    \rho(x,0) = \rho_0(x),&\quad x\in[0,1]^2.
\end{aligned}\right.    
\label{eq:FP}
\end{equation}
Here, $\rho_0$ is an isotropic Gaussian with mean $(0.5,0.5)$ and standard deviation $0.1$. 
Let the parameter be $\mu=(\alpha,\beta,t)$ and $v(x) = (\alpha\cos(2\pi x_2), \beta\sin(2\pi x_1))$.
Then $\rho(\mu)$ denotes the solution to \eqref{eq:FP} at time $t$ with the velocity field determined by $\alpha$ and $\beta$.

\subsection{Implementation}

% \paragraph{Discretization and the full-order model}
For integers $n_x,n_t$, define $\Delta x=1/n_x$, $\Delta t=1/n_t$, spatial points $x_k=((k_1-\frac12)\Delta x,(k_2-\frac12)\Delta x)$ for $k=(k_1,k_2)$, and time points $t_l=l\Delta t$. Use $e_1=(1,0)$ and $e_2=(0,1)$ for index shifts.
For a space-time function $u$, its grid representation is also denoted by $u$, with $u_k^l\approx u(x_k,t_l)$.
For a probability density $\rho$, we normalize its grid representation so that $(\Delta x)^2\sum_{k_1,k_2=1}^{n_x}\rho_k=1$.
% Let $\mathcal{G}_x$ be the spatial grid and $\mathcal{G}$ the space-time grid. For $u:\Omega\to\mathbb{R}$, write $u_{\mathcal{G}_x}$ for its grid values; for $u:\Omega\times[0,1]\to\mathbb{R}$, write $u_{\mathcal{G}}$ for its discrete values. When there is no ambiguity, we omit the subscript.

The full-order model (FOM) uses a semi-implicit Lax--Friedrichs scheme with periodic boundary conditions.
For a function $u$ defined on the grid, the elementary discrete differential operators are
\begin{equation*}
    (D_i u)_k = \frac{u_{k+e_i}-u_{k-e_i}}{2\Delta x}, \qquad
    (\Delta_i u)_k = \frac{u_{k+e_i}-2u_k+u_{k-e_i}}{(\Delta x)^2}, \quad i=1,2.
\end{equation*}
Periodic indexing is imposed by $u_{(0,k_2)}=u_{(n_x,k_2)}$, $u_{(k_1,0)}=u_{(k_1,n_x)}$, and $u_{(n_x+1,k_2)}=u_{(1,k_2)}$, $u_{(k_1,n_x+1)}=u_{(k_1,1)}$.
For the initial density $\rho^0$ and time-independent velocity $v=(v_1,v_2)$ defined on the grid, the update reads
\begin{equation}
    \frac{\rho_k^{l+1}-\rho_k^l}{\Delta t} - \nu_{\text{num}}\Delta x(\Delta_1 \rho^{l+1}+\Delta_2 \rho^{l+1})_k + (D_1(\rho^{l+1}v_1^l)+D_2(\rho^{l+1}v_2^l))_k = 0.
\end{equation}

We sample 10 independent pairs of $(\alpha,\beta)$ from $U[-0.4,0.4]$.
For each pair, we solve the equation with the full-order model on a grid with $n_x=n_t=100$.
This yields 1000 samples of $(\mu,\rho(\mu))$ where $\mu=(\alpha,\beta,t)$ and each $\rho(\mu)$ is defined on $m=n_x^2$ grid points.
We then sample $n=200$ of these pairs as $\mu_j$ and $\rho(\mu_j)$, $j=1,\ldots,n$, to learn the reduced order model.

\paragraph{Offline stage (\romannum{1})}

To learn the reduced-order model, we choose the uniform distribution on $[0,1]^2$ as the reference $\bar{\rho}$ and compute the Kantorovich potential $\phi(\mu_j)$ that pushes $\bar{\rho}$ to $\rho(\mu_j)$ using the back-and-forth method~\citep{bfm}. Because $\bar{\rho}$ is uniform, the operator $\mathcal{L}$ reduces to the Laplacian, which allows us to compute $\psi(\mu_j)=(I-\mathcal{L})^{1/2}\phi(\mu_j)$ efficiently via the fast Fourier transform. The probability density $\rho$, the OT potential $\phi$, and the transport signature $\psi$ are all represented on $m=n_x^2$ grid points.

We remark that the back-and-forth method solves the unregularized optimal transport problem, in contrast to entropy-regularized approaches such as the Sinkhorn algorithm. Moreover, on regular domains like $[0,1]^2$, each iteration can be implemented using the fast Fourier transform and elementwise operations, avoiding the need to solve linear systems. Consequently, the method is both accurate and efficient.
In our experiment, computing OT potential for $n=200$ training data takes less than 20 seconds.

\paragraph{Offline stage (\romannum{2})}

By collecting the sampled grid-based transport signatures, we form the finite matrix $\Psi\in\mathbb{R}^{n\times m}$.
Applying the skeleton decomposition (Algorithm~\ref{2dmvalg_symm}) to $\Psi\Psi^\top$, we select row indices $I=\{i_1,\cdots,i_r\}$ with $r=10$ and form the basis matrix $\Psi_I\in\mathbb{R}^{r\times m}$.  The coefficients $a_m(\mu_j)$ and the mapping $\widehat{a}$ are learned as stated in Sec.~\ref{subsec: off2}.  The coefficients $a_m(\mu_j)$ are computed by least squares, and the mapping $\widehat{a}$ is represented by a neural network with two hidden layers of width 32 and ReLU activations. The neural network is trained by regression using the Adam optimizer with a learning rate of $10^{-3}$ for 3000 epochs.

% We use skeleton decomposition instead of singular value decomposition to construct a basis because the former selects basis functions from the given data, which preserves spatial structure and regularity. This regularity is inherited by the coefficient map $a_m$ when the Gram matrix $G_m$ is invertible, making it easier for the neural network $\widehat{a}$ to approximate.

We use skeleton decomposition instead of singular value decomposition because it selects representative signatures from the data rather than forming global linear combinations of all snapshots. The resulting basis functions are therefore interpretable as transport signatures associated with actual parameter values. In addition, when the Gram matrix $G_m$ is well conditioned, the corresponding least-squares coefficient map is stable, which facilitates the subsequent regression step.

In skeleton decomposition, using a larger $r$ improves the low-rank approximation, but it increases the condition number of the Gram matrix $G_m$, degrades the stability of the coefficient map $a_m$. The choice of $r$ must balance the low-rank approximation error and the stability of the coefficient map $a$.

\paragraph{Online stage}

The online stage is implemented as described in Sec.~\ref{subsec: online}.
Analogous to the implementation of $(I-\mathcal{L})^{1/2}$ in offline stage (\romannum{1}), $(I-\mathcal{L})^{-1/2}$ is computed using the fast Fourier transform.
To compute the pushforward, we obtain $\nabla\phi$ via centered finite differences of a bilinear interpolant of $\phi$, and approximate the pushforward integral by adaptive midpoint sampling within each source cell followed by bilinear deposition onto the target grid.

\subsection{Verification of low-rank structure}

For each parameter $\mu_j$, $j=1,\ldots,n$, the full-order model and offline stage (\romannum{1}) produce $\rho(\mu_j)$, $\phi(\mu_j)$ and $\psi(\mu_j)$, each defined on $m=n_x^2$ grid points.
Collecting these values yields matrices of size $n\times m$ for $\rho$, $\phi$, and $\psi$, respectively.
We then compute the reduced SVD to obtain the leading singular values $\sigma_1,\ldots,\sigma_{50}$ for the three matrices.
We plot $\sigma_i/\sigma_1$ in Figure \ref{fig:lowrank2dfp}.
Figure \ref{fig:lowrank2dfp} shows that the leading rank-decay structure of the Kantorovich potential $\phi$ and transport signature $\psi$ is improved compared to $\rho$. The ratios $\sigma_{10}/\sigma_1$ for the $\phi$- and $\psi$-matrices are smaller than $10^{-2}$, while that of the $\rho$-matrix is greater than $10^{-2}$.

\begin{figure}[h]
    \centering
    \includegraphics[width=0.5\linewidth]{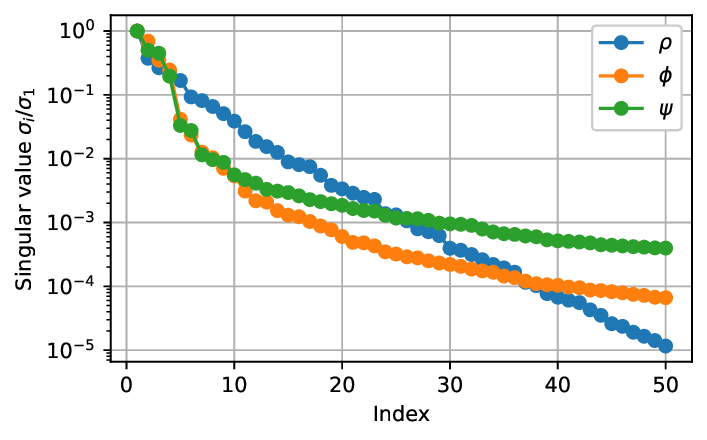}
    \caption{Singular value decay pattern of probability density $\rho$, OT potential $\phi$ and transport signature $\psi$.}
    \label{fig:lowrank2dfp}
\end{figure}

\subsection{Evaluation of reduced-order model}

We evaluate the reduced-order model on three data sets.
For each dataset, we compute the Wasserstein-2 distance between the ROM and FOM results $W_2(\rho(\mu),\widehat{\rho}(\mu))$ and report the average $W_2$ distance.
We also plot the FOM results (top rows) and ROM results (bottom rows) of 10 sampled or representative snapshots for visual comparison.
\begin{enumerate}
    \item \textbf{Training data:} $n=200$ snapshots used to train the ROM. 
    The average $W_2$ error is 0.0214.
    Figure \ref{fig:training2dfp} shows 10 random samples. 
    \item \textbf{Testing data:} 800 snapshots from the same parameter range but not used during training. 
    The average $W_2$ error is 0.0231.
    Figure \ref{fig:testing2dfp} displays 10 random samples. 
    \item \textbf{Unseen parameters:} A newly sampled parameter pair $(\alpha,\beta)$ evaluated on the same grid $(n_x=n_t=100)$. 
    The average $W_2$ error is 0.0214.
    Figure~\ref{fig:newpar2dfp1} presents the location of the sampled parameter relative to the training set, the corresponding $W_2$ error evolution in time, and the snapshots $\rho(\mu)$ at $t=0.1,0.2,\ldots,1$.

\end{enumerate}
 
\begin{figure}[h]
    \centering
    \includegraphics[width=0.8\linewidth]{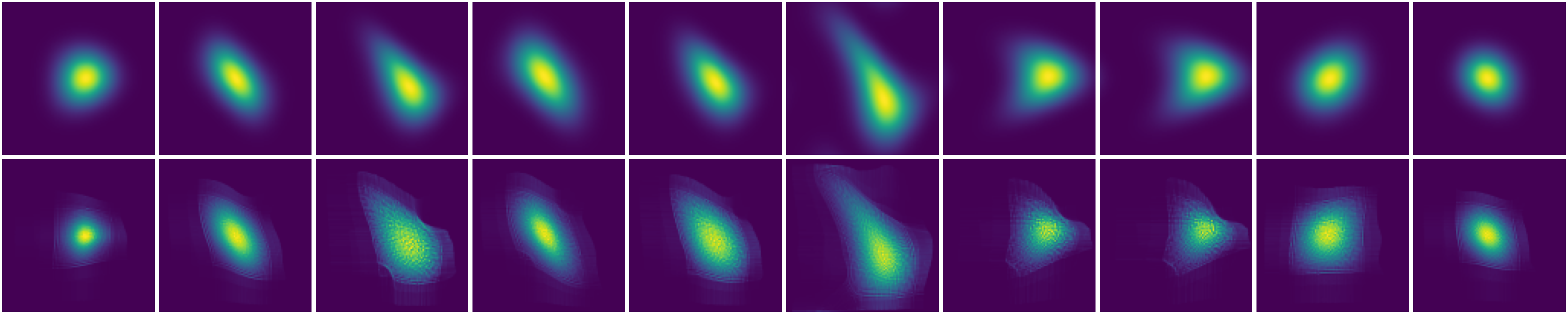}
    \caption{Samples of training data. Top row: FOM results; bottom row: ROM results.}
    \label{fig:training2dfp}
\end{figure}

\begin{figure}[h]
    \centering
    \includegraphics[width=0.8\linewidth]{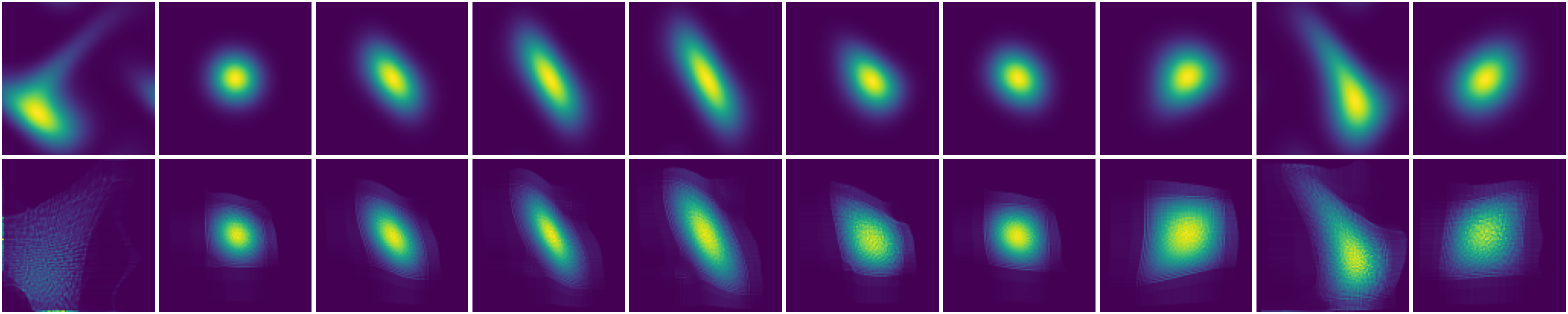}
    \caption{Samples of testing data. Top row: FOM results; bottom row: ROM results.}
    \label{fig:testing2dfp}
\end{figure}

\begin{figure}[h]
    \centering
    \includegraphics[height=0.3\linewidth]{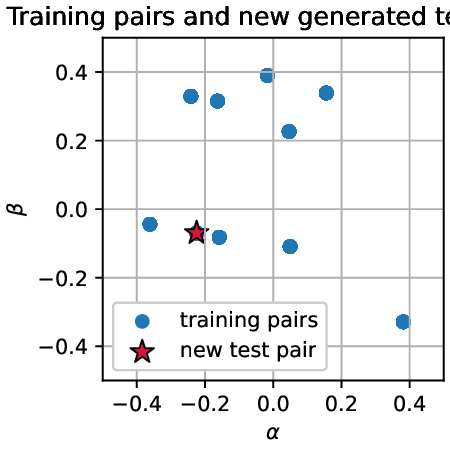}\hspace{1em}
    \includegraphics[height=0.3\linewidth]{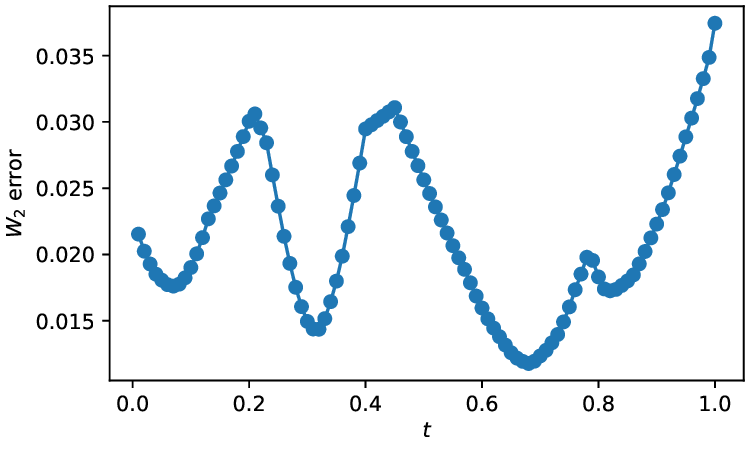}
    \includegraphics[width=0.8\linewidth]{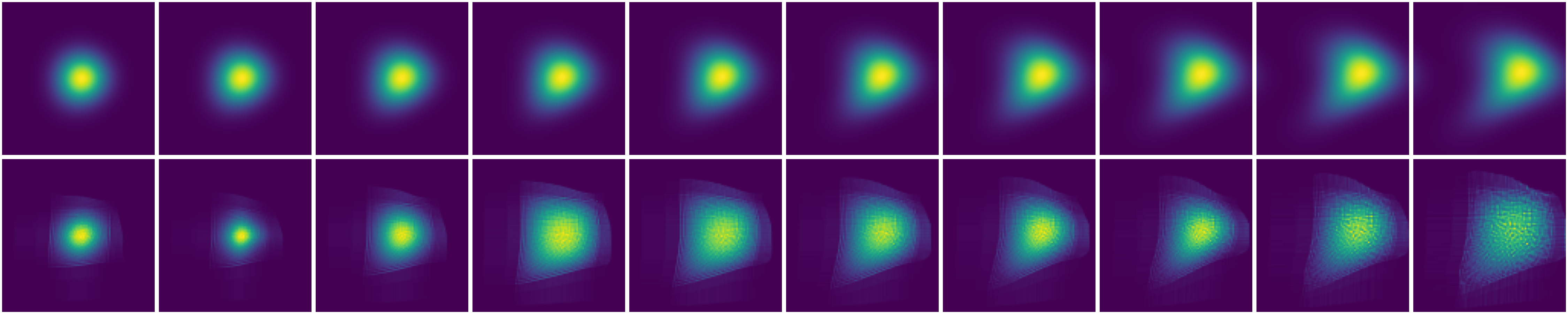}
    \caption{Numerical results for new parameters. Top left: training parameters and the newly sampled parameter. Top right: $W_2$ error versus $t$ for the new sample. Bottom: $\rho(\mu)$ of an unseen parameter $(\alpha,\beta)$ at $t=0.1,t=0.2,\cdots,1.0$ (left to right) by FOM (top row) and ROM (bottom row).}
    \label{fig:newpar2dfp1}
\end{figure}

\section{Conclusions}\label{sec:conclusions}
In this work, we introduced an optimal-transport-based reduced-order modeling framework for parametrized density-valued PDEs. For transport-dominated probability distributions, it is more effective to approximate how mass moves from a reference state than to approximate the densities themselves. The skeleton decomposition provides a compact set of representative transport modes, while the learned coefficient map gives a practical non-intrusive surrogate for new parameter values. The accompanying error analysis justifies this construction by linking the approximation error in signature space to the Wasserstein distance between the reconstructed distributions. The numerical experiments indicate that this representation can be effective for the solution of continuity equations. Important directions for future work include improving the choice of reference density, developing adaptive or local reference strategies for strongly multimodal dynamics, and extending the computational framework to higher-dimensional problems where optimal transport calculations are more demanding.

\section*{Acknowledgment}
All the authors acknowledge the support of the ICERM workshop ``Empowering a Diverse Computational Mathematics Research Community" on July 22 - August 2, 2024, where this work was initiated.

JH is partially supported by National Science Foundation under Grant No. DMS-2409858. FL is  is partially supported by Air Force Office of Scientific Research under Grant No.~FA9550-26-1-0003. ST is partially supported by National Science Foundation under Grant No. DMS-2529292. YY is partially supported by National Science Foundation under Grant No.~DMS-2409855 and Office of Naval Research under Award No.~N00014-24-1-2088. ZS is partially supported by SIAM Postdoctoral Support Program.

\bibliography{ROM}
\bibliographystyle{plain}

\end{document}